\documentclass[11pt]{article}
\usepackage[T1]{fontenc}
\usepackage{libertinus}

\usepackage{microtype}
\usepackage[a4paper,margin=30mm]{geometry}
\usepackage{amsmath,amssymb,amsthm,mathtools}
\usepackage[hidelinks]{hyperref}
\usepackage{enumitem}
\usepackage{comment}

\newtheorem{theorem}{Theorem}[section]
\newtheorem{proposition}[theorem]{Proposition}
\newtheorem{lemma}[theorem]{Lemma}
\newtheorem{corollary}[theorem]{Corollary}
\newtheorem{remark}[theorem]{Remark}

\newcommand{\R}{\mathbb R}
\newcommand{\E}{\mathbb E}
\newcommand{\Prob}{\mathbb P}
\newcommand{\conv}{\operatorname{conv}}
\newcommand{\vol}{\operatorname{vol}}
\newcommand{\Cov}{\operatorname{Cov}}

\newcommand{\1}{\mathbf 1}
\newcommand{\cH}{\mathcal H}
\newcommand{\ip}[2]{\left\langle #1,#2\right\rangle}
\newcommand{\norm}[1]{\left\lVert #1\right\rVert}

\title{Many Facets in Random Polytopes from Product and Log-Concave Measures}
\author{
  Silouanos Brazitikos\thanks{University of Crete, Greece. Email: \texttt{silouanb@uoc.gr}}
  \and
Minas Pafis \thanks{University of Athens, Greece. Email: \texttt{mipafis@math.uoa.gr}
}}
\date{}

\begin{document}
\maketitle

\begin{abstract}\footnotesize
We prove bounds of order $n^{n/2}e^{O(n)}$ for the expected number of
facets of high-dimensional random polytopes. First, let $\mu$ be a
non-degenerate compactly supported even probability measure on $\R$ satisfying
$\mu([x^\ast-s,x^\ast])\asymp s^\kappa$ near its right endpoint $x^\ast$.
For every sufficiently small fixed $\alpha>0$, the convex hull of
$N=\lfloor e^{\alpha n}\rfloor$ independent points with law $\mu^{\otimes n}$
has at least $n^{n/2}e^{-C_{\mu,\alpha}n}$ expected facets; this includes all
symmetric finite-alphabet distributions.

For every full-dimensional log-concave probability measure on $\R^n$, we
prove that there exist $T\in[n,2n]$ and
$N=\lceil e^Tn^{3/2}\rceil$ for which
\[
n^{n/2}e^{-Cn}
\leq
\mathbb E f_{n-1}(P_N)
\leq
n^{n/2}e^{Cn}.
\]
Thus the scale $n^{n/2}$, up to exponential factors, is universal for
log-concave measures in this high-dimensional exponential regime.
Finally, we construct a symmetric isotropic full-support non-log-concave
counterexample with only $(1+o(1))2^n$ expected facets.
\end{abstract}
\begin{comment}
We prove two lower bounds for the number of facets of random polytopes in high
 dimension.  The first concerns product measures.  Let $\mu$ be a non-degenerate
 even probability measure on the real line with compact support and assume that
 its mass near the right endpoint $x^*$ satisfies
\[
 c s^\kappa\leq \mu([x^*-s,x^*])\leq C s^\kappa
\]
for all sufficiently small $s>0$, where $\kappa\geq0$.  For every sufficiently
small fixed $\alpha>0$, if $N=\lfloor e^{\alpha n}\rfloor$ and
$X_1,\ldots,X_N$ are independent with law $\mu^{\otimes n}$, then
\[
 \E f_{n-1}\bigl(\conv\{X_1,\ldots,X_N\}\bigr)
 \geq n^{n/2}e^{-C_{\mu,\alpha}n}.
\]
In particular this applies to every symmetric finite-alphabet distribution;
for the symmetric Bernoulli law it gives a $0/1$ polytope with at least
$(cn)^{n/2}$ facets.  We also exhibit a symmetric isotropic full-support
non-log-concave measure for which $N=e^{\alpha n+o(n)}$ random points have only
$(1+o(1))2^n$ expected facets.

The second result has no product assumption.  For every full-dimensional
log-concave probability measure $\mu$ on $\R^n$, there are
$T\in[n,2n]$ and $N=\lceil e^T n^{3/2}\rceil$ such that the convex hull of
$N$ independent $\mu$-distributed points has expected number of facets at
least $n^{n/2}e^{-Cn}$.  The proof uses the Cram\'er transform, the recent
solution of the slicing problem, a lower bound for the mean volume of a random
simplex from a log-concave density, and the Blaschke--Petkantschin formula.
\end{comment}

\section{Introduction}

For an $n$-dimensional polytope $P$ we write $f_{n-1}(P)$ for its number of
facets.  A classical extremal problem asks for the largest possible value
\[
 g(n)=\max\{f_{n-1}(P): P\subset\R^n\text{ is a }0/1\text{ polytope}\}.
\]
The problem, associated with Fukuda and Ziegler and recorded in
\cite{Ziegler}, has a long history.  Fleiner, Kaibel and Rote proved the upper
bound $g(n)\leq 30(n-2)!$ for all sufficiently large $n$ \cite{FKR}.  On the
lower-bound side, B\'ar\'any and P\'or obtained the first superexponential
estimate
\[
 g(n)\geq \left(\frac{cn}{\log n}\right)^{n/4}
\]
in \cite{BaranyPor}.  Gatzouras, Giannopoulos and Markoulakis subsequently
proved
\[
 g(n)\geq \left(\frac{cn}{(\log n)^2}\right)^{n/2}
\]
in \cite{GGM2005}, and then improved this to
\[
 g(n)\geq \left(\frac{cn}{\log n}\right)^{n/2}
\]
in \cite{GGM2007}.  Very recently Friedland obtained the log-free estimate
\[
 g(n)\geq (cn)^{n/2}
\]
by a direct argument for random sign polytopes \cite{Friedland2026}.

The probabilistic study of $0/1$ polytopes is closely connected with threshold
phenomena for random convex hulls.  An early source of the method is the work of
Dyer, F\"uredi and McDiarmid on random points in the discrete cube
\cite{DFM}.  In a more general setting, Pafis studied random polytopes generated
by product measures and expressed the relevant thresholds through the
one-dimensional Cram\'er transform \cite{Pafis2024}.  For log-concave measures,
connections between the Cram\'er transform, half-space depth and thresholds for
random polytopes were developed in \cite{BGPHalfspace,BGP2024,BC2025,GT2026}.  The atomic
case and some limitations of discrete log-concavity were investigated in
\cite{BP2026}.  We also use the general convex-hull estimates of Hayakawa,
Lyons and Oberhauser \cite{HLO}.

The purpose of this paper is to place the many-facets phenomenon in two broad
probabilistic settings.  The first one is genuinely non-log-concave and relies
on product structure.  Let $\mu$ be a compactly supported even probability
measure on $\R$, and let
\[
 x^*=\sup\{x>0:\mu([x,\infty))>0\}.
\]
We assume that for some $\kappa\geq0$,
\begin{equation}\label{eq:intro-tail}
 c s^\kappa\leq \mu([x^*-s,x^*])\leq C s^\kappa,
 \qquad (0<s\leq s_0).
\end{equation}
This includes measures with an atom at the endpoint ($\kappa=0$), and hence
all non-degenerate symmetric finitely supported distributions.  It also
includes many purely atomic measures with infinitely many atoms accumulating
at the endpoints.  Under \eqref{eq:intro-tail}, for every sufficiently small
fixed $\alpha>0$ and $N=\lfloor e^{\alpha n}\rfloor$, we prove
\[
 \E f_{n-1}\bigl(\conv\{X_1,\ldots,X_N\}\bigr)
 \geq n^{n/2}e^{-C_{\mu,\alpha}n},
\]
where the $X_i$ are independent with law $\mu^{\otimes n}$.  Notice that we do
not symmetrize the sample.  For the Bernoulli law the resulting polytope is
already a sign polytope and is affinely equivalent to a $0/1$ polytope.

Before turning to the second theorem, we give a simple full-support symmetric
isotropic example showing that without product structure or log-concavity the
expected number of facets may remain only exponential in $n$.  

The second
theorem concerns every full-dimensional log-concave probability measure on
$\R^n$, with no product or symmetry assumption.  The log-concave result is in fact two-sided.  We prove that for every
full-dimensional log-concave probability measure on $\mathbb R^n$ and every
$N\geq2n$,
\[
\mathbb E f_{n-1}(P_N)
\leq
C^n N n^{(n-3)/2}.
\]
Consequently, throughout every fixed exponential window
$N\leq e^{bn}n^{3/2}$ one has
\[
\mathbb E f_{n-1}(P_N)
\leq
C_b^n n^{n/2}.
\]
Combined with a similar lower bound, this shows that for a suitable
$N=e^{\Theta(n)}n^{3/2}$ the expected number of facets of a random polytope
generated by an arbitrary log-concave measure is
\[
n^{n/2}e^{O(n)}.
\]
The point is that this conclusion is uniform over the entire class of
log-concave measures, despite the very different facet asymptotics exhibited
by particular models when the dimension is fixed and $N\to\infty$.
The number of sample points is larger than in the product theorem; the product
structure is what permits one to work at Cram\'er levels $B_t:=\{\Lambda^*_{\mu}<t\}$ of size $\alpha n$
with arbitrarily small fixed $\alpha$.

It is worth contrasting the preceding result with the classical theory of
random polytopes.  A substantial part of that theory concerns random points
chosen from a fixed convex body in a fixed dimension, with the number of
points tending to infinity.  In this setting the asymptotic behaviour of the
face numbers depends strongly on the geometry of the underlying body.  For
smooth convex bodies it is governed by boundary curvature and affine surface
area, whereas for polytopal containers a quite different, logarithmic
behaviour occurs; see, for instance,
\cite{BaranyLarman1988,Barany1992,Reitzner2005}.  Exact formulae are also
available for several highly symmetric models, including Gaussian and beta
polytopes; see, e.g.,
\cite{HugMunsoniusReitzner2004,KabluchkoTemesvariThale2019}.  Thus there is no
universal dependence on the sample size throughout the class of log-concave
measures when the dimension is kept fixed and the number of points tends to
infinity.

The situation is rather different in the genuinely high-dimensional
exponential regime considered here.  For Gaussian polytopes,
B\"or\"oczky, Lugosi and Reitzner
\cite{BoroczkyLugosiReitzner2024} proved, in the regime \(N/n\to\infty\),
the asymptotic formula
\[
\E f_{n-1}(P_N)
=
\left((4\pi+o(1))\log\frac{N}{n}\right)^{(n-1)/2}.
\]
In particular, when \(\log N\) is proportional to \(n\), the expected number
of facets is \(n^{n/2}\) up to factors exponential in \(n\).  Bonnet and
O'Reilly obtained a closely related high-dimensional picture for spherical
random polytopes, including the exponential regime
\(\log N\asymp n\) \cite{BonnetOReilly2022}.  The general log-concave result
proved here shows that this \(n^{n/2}\) scale is not a consequence of Gaussian
or spherical symmetry.  Up to exponential factors, it persists for every
full-dimensional log-concave probability measure, without any assumption of
smoothness, compact support, symmetry, product structure, or an explicit form
of the density.  In this sense, the result is of a different nature from the
classical fixed-dimensional stochastic approximation results: it is a
high-dimensional universality statement for the number of facets in the
window \(N=e^{\Theta(n)}\).  Outside this window one should not expect such a
universal behaviour, as the classical smooth, polytopal, and Gaussian models
already exhibit substantially different asymptotics.

We briefly describe the two proofs.  In the product case, the endpoint
assumption gives uniform estimates for moments under one-dimensional
exponential tilts.  Combined with a coarse half-space-depth estimate from
\cite{Pafis2024} and Berry--Esseen, this yields the sharp local estimate
\[
 q_{\mu^{\otimes n}}(x)\gtrsim
 \frac{e^{-\Lambda_{\mu^{\otimes n}}^*(x)}}{\sqrt n}
\]
on a fixed cube and on Cram\'er levels comparable with $n$, together with the
matching upper estimate for the tangent half-space.  A large portion of a
Cram\'er level surface lies inside a smaller fixed cube.  For every point of
this portion, a finite set of at most $D_L^n$ points at a slightly lower level
meets every half-space that penetrates sufficiently far into the Cram\'er body.
The finite set is obtained from the VC $\varepsilon$-net theorem
\cite[Corollary~3.7]{HausslerWelzl}.  The depth estimate and \cite[Proposition~13]{HLO} show
that all these points belong to the random polytope with overwhelmingly high
probability.  Finally, strong convexity shows that an exterior half-space
which misses a lower Cram\'er body intersects the chosen level surface in a
set of surface area at most $C_L^n|S^{n-1}|$.  Comparing the total exposed area
with the contribution of one facet gives the result.

The log-concave proof for the lower bound is shorter.  After isotropic normalization, the solution
of the slicing problem \cite{KlartagLehec} gives the required uniform density
bound for the original measure and all relevant exponential tilts.  We select
a Cram\'er level through the volume of the corresponding set of tilt
parameters.  On a substantial set of directions the variance of the tilted
one-dimensional statistic is between $a^2$ and $4a^2$, where
$c\sqrt n\leq a\leq Cn$.  The density of a hyperplane section and the
conditional log-concave density on that section can then be controlled
directly.  A simple random-simplex lemma converts the latter estimate into a
lower bound for the weighted section-simplex integral.  The
Blaschke--Petkantschin formula \cite{SchneiderWeil} then gives the desired
facet count.

For the upper bound, we make again use of the Blaschke--Petkantschin formula. Corresponging one-dimensional variance bounds for the tilted measures allow us to control the probability of certain half-spaces. Combining again with the random-simplex lemma yields the proof.

\section{Compactly supported product measures}

Throughout this section $\mu$ is a non-degenerate even Borel probability
measure on $\R$ with support contained in $[-x^*,x^*]$, where $x^*>0$, and we
assume \eqref{eq:intro-tail}.  Let $X$ have distribution $\mu$ and for \(t\in\R\) write
\[
\Lambda_\mu(t)=\log\E e^{tX}.
\]
Define the tilted probability measure
\[
 dP_t=e^{tX-\Lambda_\mu(t)}\,d\mu,
\]
and denote expectation under $P_t$ by $\E_t$.  Then
\[
 \E_tX=\Lambda_\mu'(t),\qquad \operatorname{Var}_t(X)=\Lambda_\mu''(t).
\] In addition, if $t_1,\ldots,t_n \in \mathbb{R}$ we also define the probability measure \[P_{t_1,\ldots,t_n}=P_{t_1}\otimes\cdots \otimes P_{t_n}.\]
The function $\Lambda_\mu$ is even and strictly convex, and
$\Lambda_\mu':\R\to(-x^*,x^*)$ is a strictly increasing bijection.  Its
Legendre transform $\Lambda_\mu^*$ is even, strictly convex and smooth on
$(-x^*,x^*)$, and
\begin{equation}\label{eq:phi-derivatives}
 (\Lambda_\mu^*)'(x)=(\Lambda_\mu')^{-1}(x),
 \qquad
 (\Lambda_\mu^*)''(x)=\frac{1}{\Lambda_\mu''((\Lambda_\mu^*)'(x))}.
\end{equation}
Since $\Lambda_\mu''(t)=\operatorname{Var}_t(X)\leq(x^*)^2$, we have the global
strong-convexity estimate
\begin{equation}\label{eq:phi-strong}
 (\Lambda_\mu^*)''(x)\geq (x^*)^{-2},
 \qquad |x|<x^*.
\end{equation}
For $\mu_n=\mu^{\otimes n}$ and $r>0$ we write $
 B_r=\{x\in[-x^*,x^*]^n:\Lambda_{\mu_n}^*(x)\leq r\}$ and an important property in the product case is
\[
 \Lambda_{\mu_n}^*(x)=\sum_{i=1}^n\Lambda_\mu^*(x_i).
\]
We also write
\[
 q_n(x)=\inf\{\mu_n(H):H\text{ is a closed half-space containing }x\}.
\]
for Tukey's half-space depth of $\mu_n$.
\subsection{One-dimensional estimates}

The endpoint assumption is used only through the following elementary
consequences.

\begin{lemma}\label{lem:compact-tilts}
For every fixed integer $m\geq1$,
\begin{equation}\label{eq:uniform-tilted-moment}
 \sup_{t\in\R}|t|^m\E_t|X-\Lambda_\mu'(t)|^m<\infty.
\end{equation}
Moreover,
\begin{equation}\label{eq:J-ratio}
 \frac{t^2\Lambda_\mu''(t)}
 {\Lambda_\mu^*(\Lambda_\mu'(t))}\longrightarrow2
 \qquad (t\to0).
\end{equation}
Finally,
\begin{equation}\label{eq:tail-cramer}
 \frac{-\log\mu([x,\infty))}{\Lambda_\mu^*(x)}\longrightarrow1
 \qquad (x\uparrow x^*).
\end{equation}
\end{lemma}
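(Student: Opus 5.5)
The three assertions are essentially independent, so I will prove them separately. By evenness of $\mu$ it suffices to treat $t\geq0$ in \eqref{eq:uniform-tilted-moment}.

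\emph{Uniform tilted moments.} I split the range of $t$ into a compact part and a large part. On a compact range $|t|\leq t_0$ the quantity $|t|^m\E_t|X-\Lambda_\mu'(t)|^m$ is continuous in $t$ and bounded by $(2x^*t_0)^m$, since the support of $\mu$ lies in $[-x^*,x^*]$. For large $t$, write $Y=x^*-X\geq0$. The tilted law of $Y$ has density proportional to $e^{-tY}$ with respect to the law $\nu$ of $Y$ under $\mu$. The assumption \eqref{eq:intro-tail} says $\nu([0,s])\asymp s^\kappa$ for $s\leq s_0$.

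For $k\geq0$ I will show $\E_t Y^k\leq C_k t^{-k}$ for $t\geq t_0$. The denominator satisfies
\[
\E e^{-tY}\geq e^{-1}\nu([0,1/t])\geq c\,t^{-\kappa}.
\]
For the numerator, integrate by parts:
\[
\int y^ke^{-ty}\,d\nu(y)=\int_0^\infty \nu([0,y])\,\bigl(-\tfrac{d}{dy}(y^ke^{-ty})\bigr)\,dy .
\]
Split the integral at $s_0$ and use $\nu([0,y])\leq Cy^\kappa$ for $y\leq s_0$. This gives a bound $O(t^{-k-\kappa})$, plus a term of size $e^{-ts_0/2}$ from $y\geq s_0$, which is negligible. (When $\kappa=0$ there is an atom, and the same bound holds directly.)

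Hence $\E_t Y^k\lesssim t^{-k}$. In particular $x^*-\Lambda'_\mu(t)=\E_tY\lesssim 1/t$, and
\[
\E_t|X-\Lambda'_\mu(t)|^m=\E_t|Y-\E_tY|^m\leq 2^m\E_tY^m\lesssim t^{-m}.
\]
This gives \eqref{eq:uniform-tilted-moment}. The main work here is keeping the constants uniform, including the case $\kappa=0$.

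\emph{The ratio near $t=0$.} By evenness, $\Lambda_\mu(t)=\tfrac{\sigma^2}{2}t^2+O(t^4)$ with $\sigma^2=\operatorname{Var}X>0$, by non-degeneracy. Set $x=\Lambda'_\mu(t)$. Then
\[
\Lambda^*_\mu(x)=t\Lambda'_\mu(t)-\Lambda_\mu(t)=\sigma^2t^2-\tfrac{\sigma^2}{2}t^2+O(t^4)=\tfrac{\sigma^2}{2}t^2(1+O(t^2)),
\]
while $t^2\Lambda''_\mu(t)=\sigma^2t^2(1+O(t^2))$. The ratio therefore tends to $2$, which is \eqref{eq:J-ratio}.

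\emph{The tail estimate.} The lower bound $-\log\mu([x,\infty))\geq\Lambda^*_\mu(x)$ is the Chernoff bound. For the reverse inequality I use the tail assumption directly:
\[
-\log\mu([x,\infty))=-\log\mu([x,x^*])\leq\kappa\log\frac{1}{x^*-x}+O(1).
\]
It remains to show $\Lambda^*_\mu(x)\geq\kappa\log\frac{1}{x^*-x}-O(1)$ as $x\uparrow x^*$, and also that $\Lambda^*_\mu(x)\to\infty$; the latter is needed to absorb the $O(1)$ terms when $\kappa=0$.

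I take $t$ with $\Lambda'_\mu(t)=x$. From the first part, $x^*-x=\E_tY\asymp 1/t$ for large $t$; the lower bound $\E_tY\gtrsim1/t$ follows by the same Laplace-type computation. Then
\[
\Lambda^*_\mu(x)=tx-\Lambda_\mu(t)=t(x-x^*)-\log\E e^{-tY}.
\]
The first term is $O(1)$. Since $\E e^{-tY}\leq Ct^{-\kappa}+e^{-ts_0}$, the second term is $\geq\kappa\log t-O(1)=\kappa\log\frac{1}{x^*-x}-O(1)$.

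This gives the reverse inequality up to $O(1)$ when $\kappa>0$, because both sides tend to infinity.

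If $\kappa=0$, then $\mu$ has an atom $p=\mu(\{x^*\})>0$ at $x^*$, and $\mu([x,\infty))\to p$. In this case $\Lambda^*_\mu(x)\to-\log p$ as well, since $\E e^{-tY}\to p$. Both quantities then tend to the same finite positive limit $-\log p$, so the ratio still tends to $1$.

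I expect this last step to be the main obstacle. The asymptotic $\E_tY\asymp1/t$ must be two-sided, and its lower bound needs the lower tail condition $\nu([0,s])\geq cs^\kappa$.
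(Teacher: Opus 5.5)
Your proposal is correct. For the first two assertions it follows the paper closely: the same reduction to $Y=x^*-X$ and to $\sup_{t>0}t^m\E_tY^m<\infty$, and the same denominator bound $e^{-1}\nu([0,1/t])\geq ce^{-1}t^{-\kappa}$. You use integration by parts where the paper partitions $[0,s_0]$ into the intervals $[k/t,(k+1)/t)$. Note that $-\frac{d}{dy}(y^ke^{-ty})$ is negative on $(0,k/t)$, so you must bound by its absolute value; this costs nothing. Your Taylor expansion replaces the paper's identity $\Lambda_\mu^*(\Lambda_\mu'(t))=t^2\int_0^1u\Lambda_\mu''(tu)\,du$.

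The genuine difference is the tail asymptotic. The paper simply cites \cite[Theorem~1.3]{GatzourasGiannopoulos2009}. You derive it from the endpoint hypothesis: the Chernoff bound gives one side, and $-\log\mu([x,x^*])\leq\kappa\log\frac{1}{x^*-x}+O(1)$ together with a matching lower bound for $\Lambda_\mu^*$ gives the other. The citation is shorter. Your route makes the lemma self-contained and yields the sharper fact $\Lambda_\mu^*(x)=\kappa\log\frac{1}{x^*-x}+O(1)$ when $\kappa>0$.

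It can also be simplified. The step you flag as the main obstacle is unnecessary. Instead of evaluating at the maximizing tilt, use $\Lambda_\mu^*(x)\geq t(x-x^*)-\log\E e^{-tY}$ with $t=(x^*-x)^{-1}$. Together with your bound $\E e^{-tY}\lesssim t^{-\kappa}$, this gives $\Lambda_\mu^*(x)\geq\kappa\log\frac{1}{x^*-x}-O(1)$ at once, with no need for $\E_tY\gtrsim1/t$. That lower bound is true for $\kappa>0$, because $P_t(Y\leq\varepsilon/t)\leq(Ce/c)\varepsilon^\kappa$; this uses the upper tail bound together with $\kappa>0$, not only the lower one. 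It fails for $\kappa=0$: for the Bernoulli law, $\E_tY$ decays exponentially.

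In the case $\kappa=0$, your claim $\Lambda_\mu^*(x)\to-\log p$ needs $t(x^*-x)=t\E_tY\to0$, not merely $O(1)$. This follows by dominated convergence, since $tye^{-ty}\leq e^{-1}$ and it tends to $0$ for each $y>0$. Alternatively, take $t=(x^*-x)^{-1/2}$ in the same lower bound and combine it with the Chernoff bound $\Lambda_\mu^*(x)\leq-\log\mu([x,\infty))\to-\log p$.
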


\begin{proof}
By symmetry it is enough to consider $t>0$.  Put $Y=x^*-X$ and let $\nu$ be
its law. Notice that \[|X-\mathbb{E}_t X|=|Y-\mathbb{E}_tY|.\] So a standard probabilistic argument implies that it suffices to prove that for every $m \in \mathbb{N}$ \[\sup_{t>0}t^m \mathbb{E}_t Y^m<+\infty.\]Let $t>0$. Observe that
\begin{equation}\label{eq:tilted-Y-ratio}
 t^m\E_tY^m
 =
 \frac{\displaystyle\int_0^{2x^{\ast}}(ty)^me^{-ty}\,d\nu(y)}
 {\displaystyle\int_0^{2x^{\ast}}e^{-ty}\,d\nu(y)}.
\end{equation}
If $t\leq s_0^{-1}$, then
\[
 t^m\E_tY^m\leq\left(\frac{2x^{\ast}}{s_0}\right)^m.
\]
We may therefore assume that $t\geq s_0^{-1}$. For the denominator
of \eqref{eq:tilted-Y-ratio}, we obtain
\begin{align}
 \int_0^{2x^{\ast}}e^{-ty}\,d\nu(y)
 &\geq\int_0^{1/t}e^{-ty}\,d\nu(y)\notag\\
 &\geq e^{-1}\nu([0,1/t])\notag\\
 &=e^{-1}\mu([x^{\ast}-1/t,x^{\ast}])
 \geq ce^{-1}t^{-\kappa}.
 \label{eq:denominator-lower}
\end{align}
For the numerator, write
\begin{equation}\label{eq:numerator-split}
 \int_0^{2x^{\ast}}(ty)^me^{-ty}\,d\nu(y)
 =
 \int_{[0,s_0]}(ty)^me^{-ty}\,d\nu(y)
 +\int_{(s_0,2x^{\ast}]}(ty)^me^{-ty}\,d\nu(y)
 =:I_1+I_2.
\end{equation}
Clearly,
\begin{equation}\label{eq:I2}
 I_2\leq(2tx^{\ast})^me^{-ts_0}.
\end{equation}
For
\[
 J_k=\left[\frac{k}{t},\frac{k+1}{t}\right),
 \qquad k=0,1,\ldots,\lfloor ts_0\rfloor-1,
\]
condition \eqref{eq:intro-tail} gives
\begin{align}
 \int_{J_k}(ty)^me^{-ty}\,d\nu(y)
 &\leq(k+1)^me^{-k}\nu([0,(k+1)/t))\notag\\
 &\leq Ct^{-\kappa}(k+1)^{\kappa+m}e^{-k}.
 \label{eq:Jk}
\end{align}
The remaining interval satisfies the same estimate:
\begin{align}
 \int_{[\lfloor ts_0\rfloor/t,s_0]}(ty)^me^{-ty}\,d\nu(y)
 &\leq C t^{-\kappa}
 (\lfloor ts_0\rfloor+1)^{\kappa+m}e^{-\lfloor ts_0\rfloor}.
 \label{eq:last-interval}
\end{align}
Consequently,
\begin{equation}\label{eq:I1}
 I_1\leq C t^{-\kappa}
 \sum_{k=0}^{\lfloor ts_0\rfloor}
 (k+1)^{\kappa+m}e^{-k}
 \leq C_{\kappa,m}t^{-\kappa}.
\end{equation}
Combining \eqref{eq:tilted-Y-ratio},
\eqref{eq:denominator-lower}, \eqref{eq:I2} and \eqref{eq:I1}, we get
\[
 t^m\E_tY^m
 \leq C_{\kappa,m}+C_m t^{\kappa+m}e^{-ts_0},
\]
which is uniformly bounded for $t\geq s_0^{-1}$, and thus \eqref{eq:uniform-tilted-moment} is proved.

Since
\[
 \Lambda_\mu^*(\Lambda_\mu'(t))
 =t\Lambda_\mu'(t)-\Lambda_\mu(t)
 =\int_0^t s\Lambda_\mu''(s)\,ds
 =t^2\int_0^1u\Lambda_\mu''(tu)\,du,
\]
continuity of $\Lambda_\mu''$ at the origin gives \eqref{eq:J-ratio}.
For \eqref{eq:tail-cramer} we refer to \cite[Theorem~1.3]{GatzourasGiannopoulos2009}.
\end{proof}
Let
\[
 r^*=\sup_{|x|<x^*}\Lambda_\mu^*(x)\in(0,\infty].
\]
By \eqref{eq:tail-cramer}, the product-measure depth estimate of
\cite[Theorem~3.3]{Pafis2024} applies: for every $\zeta>0$, if $n$ is sufficiently large,
then
\begin{equation}\label{eq:coarse-depth-product}
 \inf_{y\in B_r}q_n(y)
 \geq\exp\bigl(-(1+\zeta)r-2\zeta n\bigr),
 \qquad r>0.
\end{equation}
When $\mu$ has endpoint atoms, contact points on the boundary of
$[-x^*,x^*]^n$ are interpreted through the normal-cone argument in
\cite[Theorem~4.9]{BP2026}.

\subsection{A local depth estimate}

\begin{theorem}\label{thm:product-local-depth}
There exists
\[
 0<\eta<\min\{1,x^*/4\}
\]
with the following property.  For every fixed $0<a<b<r^*$ there are constants
$c,C>0$ such that, for all sufficiently large $n$ and every
$x\in[-\eta,\eta]^n$ satisfying
\[
 an\leq\Lambda_{\mu_n}^*(x)\leq bn,
\]
one has
\begin{equation}\label{eq:local-depth-product}
 q_n(x)\geq c\frac{e^{-\Lambda_{\mu_n}^*(x)}}{\sqrt n}.
\end{equation}
Moreover, if
\[
 H_x^+=\{y:\ip{\nabla\Lambda_{\mu_n}^*(x)}{y-x}\geq0\},
\]
then
\begin{equation}\label{eq:tangent-upper-product}
 \mu_n(H_x^+)\leq C\frac{e^{-\Lambda_{\mu_n}^*(x)}}{\sqrt n}.
\end{equation}
\end{theorem}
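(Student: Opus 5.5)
The plan is to reduce both estimates to exponential tilting combined with Berry--Esseen. For a point $z$ in the interior of $[-x^*,x^*]^n$, put $t=\nabla\Lambda_{\mu_n}^*(z)$, so that $t_i=(\Lambda_\mu^*)'(z_i)$ and $\Lambda_\mu'(t_i)=z_i$. Let $H_z^+$ be the tangent half-space at $z$, and let $S=\ip{t}{X-z}$ with $X\sim P_{t_1,\ldots,t_n}$. Then
\[
 \mu_n(H_z^+)=e^{-\Lambda_{\mu_n}^*(z)}\,\E_t\bigl[e^{-S}\1_{\{S\geq0\}}\bigr].
\]
Under the tilted law, $S$ is a sum of independent centred variables with variance $\sigma_z^2=\sum_i t_i^2\Lambda_\mu''(t_i)$. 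By \eqref{eq:uniform-tilted-moment} with $m=3$, each summand has third absolute moment bounded uniformly in $t_i$. Hence Berry--Esseen applies with error $\lesssim n/\sigma_z^3$. Assume $c_1n\leq\sigma_z^2\leq C_1n$. Splitting $\{S\geq0\}$ into windows $[k,k+1)$ and applying Berry--Esseen to each window then gives
\[
 \E_t\bigl[e^{-S}\1_{\{S\geq0\}}\bigr]\asymp\frac{1}{\sqrt n}.
\]
The upper bound follows from the window bound $\Prob(S\in[k,k+1))\lesssim 1/\sigma_z+n/\sigma_z^3$, summed against $e^{-k}$. The lower bound uses only the first window.

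The upper bound of $\sigma_z^2$ always holds by \eqref{eq:uniform-tilted-moment} with $m=2$. For the lower bound of $\sigma_z^2$, choose $\eta$ small so that \eqref{eq:J-ratio} gives $t^2\Lambda_\mu''(t)\asymp\Lambda_\mu^*(\Lambda_\mu'(t))\asymp z^2$ whenever $|z|\leq 2\eta$ (using \eqref{eq:phi-strong} and smoothness of $\Lambda_\mu^*$ near $0$). For $z=x\in[-\eta,\eta]^n$ with $\Lambda_{\mu_n}^*(x)\geq an$, this gives $\sigma_x^2\asymp\sum_i x_i^2\asymp\Lambda_{\mu_n}^*(x)\asymp n$. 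This proves \eqref{eq:tangent-upper-product}, and also the lower bound for the tangent half-space itself.

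For \eqref{eq:local-depth-product}, take an arbitrary closed half-space $H\ni x$. Let $z$ be the minimiser of $\Lambda_{\mu_n}^*$ on $H$. If $\mu_n$ has endpoint atoms, $z$ may lie on the boundary of the cube; this case is interpreted through the normal-cone argument of \cite[Theorem~4.9]{BP2026}. Then $H\supseteq H_z^+$, $\Lambda_{\mu_n}^*(z)\leq\Lambda_{\mu_n}^*(x)$, and $\ip{\nabla\Lambda_{\mu_n}^*(z)}{x-z}\geq0$. By the strong convexity \eqref{eq:phi-strong},
\[
 \Lambda_{\mu_n}^*(x)\geq\Lambda_{\mu_n}^*(z)+\frac{|x-z|^2}{2(x^*)^2}.
\]
\emph{Case 1: $|x-z|^2\geq\delta n$}, with $\delta$ to be fixed. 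Apply the coarse bound \eqref{eq:coarse-depth-product} at the level $r=\Lambda_{\mu_n}^*(z)\leq bn$:
\[
 \mu_n(H)\geq q_n(z)\geq e^{-(1+\zeta)\Lambda_{\mu_n}^*(z)-2\zeta n}\geq e^{-\Lambda_{\mu_n}^*(x)+\delta n/(2(x^*)^2)-\zeta bn-2\zeta n}.
\]
Choosing $\zeta\ll\delta$ makes this much larger than $e^{-\Lambda_{\mu_n}^*(x)}/\sqrt n$.

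\emph{Case 2: $|x-z|^2<\delta n$.} Only $\leq\delta n/\eta^2$ coordinates can satisfy $|z_i-x_i|>\eta$. On the remaining coordinates $|z_i|\leq2\eta$, so again $t_i^2\Lambda''_\mu(t_i)\asymp z_i^2$. Moreover,
\[
 \sum_{\text{good } i}z_i^2\geq\Bigl(\bigl(\textstyle\sum_{\text{good}}x_i^2\bigr)^{1/2}-\sqrt{\delta n}\Bigr)^2\gtrsim n,
\]
because $\sum_i x_i^2\gtrsim an$ while the bad coordinates carry $\sum x_i^2\leq\eta^2\cdot\delta n/\eta^2=\delta n$. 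Taking $\delta$ small relative to $a$ therefore gives $\sigma_z^2\geq c_1n$. The tilting argument at $z$ then yields
\[
 \mu_n(H)\geq\mu_n(H_z^+)\geq c\,\frac{e^{-\Lambda_{\mu_n}^*(z)}}{\sqrt n}\geq c\,\frac{e^{-\Lambda_{\mu_n}^*(x)}}{\sqrt n}.
\]

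The main obstacle is Case 2 when the contact point $z$ has a few coordinates near $\pm x^*$, possibly at atoms. There the tilts $t_i$ are huge, and one must check that the Berry--Esseen input remains uniformly controlled. This is exactly why \eqref{eq:uniform-tilted-moment} is stated for all $t$, with the factor $|t|^m$. The boundary contact points, where $\nabla\Lambda_{\mu_n}^*$ is undefined, must be handled by the limiting and normal-cone argument cited above. A further point of care is that the constants $\eta$, $\delta$ and $\zeta$ must be fixed in that order, with $\eta$ depending only on $\mu$.
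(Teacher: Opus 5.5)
There is one step that fails as written; the rest of your structure is sound. Your architecture is the paper's: tilt at the contact point $z$, use the coarse bound \eqref{eq:coarse-depth-product} when $z$ is far from $x$, apply Berry--Esseen when it is close, and defer boundary contacts to the normal-cone argument. Two of your choices differ harmlessly. Your triangle-inequality lower bound for $\sum_{\text{good}}z_i^2$ is a cleaner substitute for the paper's proof of \eqref{eq:local-mass-product}. Splitting on $|x-z|^2$ rather than on $\Delta$ is equivalent by strong convexity.

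The failing step is the lower bound for $\E_t[e^{-S}\1_{\{S\geq0\}}]$ obtained from \emph{the first window}. From \eqref{eq:uniform-tilted-moment} alone you only get $\rho=\sum_i|t_i|^3\E_{t_i}|X_i-z_i|^3\leq Kn$. So the Berry--Esseen error for $\Prob(S\in[0,1))$ is at most $2C_{\mathrm{BE}}\rho/\sigma_z^3\leq 2C_{\mathrm{BE}}(K/c_1)\,\sigma_z^{-1}$. This is of the same order $n^{-1/2}$ as the main term $\Prob(G\in[0,1))\leq(2\pi)^{-1/2}\sigma_z^{-1}$. Nothing makes $C_{\mathrm{BE}}K/c_1$ small: $K$ is a fixed constant of $\mu$, and $c_1$ shrinks with $a$. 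Hence $\Prob(G\in[0,1))-2C_{\mathrm{BE}}\rho/\sigma_z^3$ may be negative, and the lower bound in Case~2 is not established. Your upper bound \eqref{eq:tangent-upper-product} is unaffected, since there the error only adds a term of the right order.

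Either of two repairs works.

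The paper's repair makes $\rho/\sigma_z^2$ small. It chooses $\eta$ so that, besides the variance comparison, $|t|^3\E_t|X-z|^3\leq\varepsilon\,t^2\Lambda_\mu''(t)$ whenever $|z|\leq2\eta$; this follows from continuity of the tilted moments at $t=0$. The good coordinates then contribute at most $\varepsilon\sigma_z^2$ to $\rho$. The at most $\delta n/\eta^2$ bad coordinates contribute $O(K\delta n/\eta^2)$. Hence $\rho/\sigma_z^2\leq\varepsilon+CK\delta/(\eta^2c_1)$. Berry--Esseen applied to the bounded-variation function $s\mapsto e^{-s}\1_{[0,\infty)}(s)$, together with the Mills estimate, then gives a lower bound $\gtrsim1/\sigma_z$.

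Alternatively, keep only your uniform bound $\rho\leq Kn$ and widen the window to $[0,w)$ with a constant $w\geq4\sqrt{2\pi e}\,C_{\mathrm{BE}}K/c_1$. For large $n$ this gives $\Prob(S\in[0,w))\geq\sigma_z^{-1}\bigl(w/\sqrt{2\pi e}-2C_{\mathrm{BE}}K/c_1\bigr)\geq w/(2\sqrt{2\pi e}\,\sigma_z)$. Therefore $\E_t[e^{-S}\1_{\{S\geq0\}}]\geq e^{-w}w/(2\sqrt{2\pi e}\,\sigma_z)\gtrsim n^{-1/2}$, with constants depending on $a$ and $\mu$. This route dispenses with the local third-moment condition entirely, so $\eta$ only needs the variance comparison.

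A small slip: $H\supseteq H_z^+$ is false when $0\in H$, since then $z=0$ and $H_0^+=\R^n$. That situation falls into your Case~1 because $|x|^2\gtrsim an>\delta n$. Alternatively, dismiss it by evenness ($\mu_n(H)\geq1/2$), as the paper does.
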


\begin{proof}
Fix a sufficiently small absolute number $\varepsilon>0$.  By
\eqref{eq:J-ratio}, continuity of the tilted second and third centered moments
at $0$, and \eqref{eq:phi-derivatives}, we may choose $\eta>0$ so small that,
whenever $|z|\leq2\eta$ and $t=(\Lambda_\mu^*)'(z)$,
\begin{equation}\label{eq:local-variance-product}
 \Lambda_\mu^*(z)\leq t^2\Lambda_\mu''(t)\leq3\Lambda_\mu^*(z)
\end{equation}
and
\begin{equation}\label{eq:local-third-product}
 |t|^3\E_t|X-z|^3
 \leq \varepsilon\,t^2\Lambda_\mu''(t).
\end{equation}
The choice of $\eta$ is independent of $a$ and $b$.

Fix $0<a<b<r^*$ and a closed half-space $H$ containing $x$.  If $0\in H$,
evenness gives $\mu_n(H)\geq1/2$.  Assume therefore that $0\notin H$.  Choose
$z\in H\cap[-x^*,x^*]^n$ minimizing $\Lambda_{\mu_n}^*$ and put
\[
 r_H=\Lambda_{\mu_n}^*(z)\leq\Lambda_{\mu_n}^*(x).
\]
Then $z\in\partial H$ and $\partial H$ supports $B_{r_H}$ at $z$.

Suppose first that $z\in\partial[-x^*,x^*]^n$.  This is possible only when
$\mu$ has an endpoint atom.  The normal-cone description in \cite[Lemma~4.8]{BP2026}
then shows, after sign changes and permutation of coordinates, that the
half-space contains the event on which a certain set of coordinates is fixed
at the endpoint.  Hence
\[
 \mu_n(H)\geq e^{-r_H}\geq e^{-\Lambda_{\mu_n}^*(x)},
\]
which is stronger than the required estimate.

We may therefore assume that $z\in(-x^*,x^*)^n$.  Then
\[
 H=\{y:\ip{t}{y-z}\geq0\},
 \qquad
 t=(t_1,\ldots,t_n)=\nabla\Lambda_{\mu_n}^*(z),
\]
and $z_i=\Lambda_\mu'(t_i)$.  Put
\[
 \Delta=\Lambda_{\mu_n}^*(x)-r_H.
\]
Choose $0<\delta<a/2$ sufficiently small, and then set
$\zeta=\delta/(b+3)$.  If $\Delta\geq\delta n$, the coarse estimate
\eqref{eq:coarse-depth-product} gives
\[
 \begin{aligned}
 \mu_n(H)
 &\geq q_n(z)\\
 &\geq\exp\bigl(-(1+\zeta)r_H-2\zeta n\bigr)\\
 &\geq\exp\bigl(-\Lambda_{\mu_n}^*(x)+\zeta n\bigr),
 \end{aligned}
\]
and we are done.

It remains to consider $\Delta<\delta n$.  By the strong convexity
\eqref{eq:phi-strong} and the fact that $x\in H$,
\[
 \norm{x-z}_2^2\leq C\delta n.
\]
Let
\[
 I=\{i:|z_i|\leq2\eta\}.
\]
Since $|x_i|\leq\eta$, every $i\notin I$ satisfies
$|x_i-z_i|>\eta$, and hence
\begin{equation}\label{eq:I-complement-product}
 |I^c|\leq C_\eta\delta n.
\end{equation}

We claim that
\begin{equation}\label{eq:local-mass-product}
 \sum_{i\in I}\Lambda_\mu^*(z_i)\geq c_{\eta,a}n.
\end{equation}
Indeed, $t_iz_i\geq0$ and, since $x\in H$,
\[
 \ip{t}{z}\leq\ip{t}{x}\leq\eta\sum_{i=1}^n|t_i|.
\]
For $i\notin I$ we have $|z_i|>2\eta$, so
\[
 \sum_{i\in I^c}|t_i|\leq\sum_{i\in I}|t_i|.
\]
Moreover, $\Lambda_\mu\geq0$ implies
\[
 \Lambda_\mu^*(z_i)=t_iz_i-\Lambda_\mu(t_i)\leq x^*|t_i|.
\]
On the fixed interval $[-2\eta,2\eta]$, the relation
$\Lambda_\mu^*(\Lambda_\mu'(t))\asymp_\eta t^2$ that follows by \eqref{eq:J-ratio} and the positivity of $\Lambda_\mu''$, gives
\[
 \sum_{i\in I}|t_i|
 \leq C_\eta\sqrt{n\sum_{i\in I}\Lambda_\mu^*(z_i)}.
\]
Since $r_H\geq(a-\delta)n\geq an/2$, we obtain
\[
 \frac a2n
 \leq \sum_{i\in I}\Lambda_\mu^*(z_i)
     +C_\eta x^\ast\sqrt{n\sum_{i\in I}\Lambda_\mu^*(z_i)},
\]
which proves \eqref{eq:local-mass-product}.

Under the product tilt $P_{t_1,\ldots,t_n}$ set
\[
 S=\sum_{i=1}^nt_i(X_i-z_i),
\]
and write
\[
 \sigma^2=\sum_{i=1}^nt_i^2\Lambda_\mu''(t_i),
 \qquad
 \rho=\sum_{i=1}^n|t_i|^3\E_{t_i}|X_i-z_i|^3.
\]
The local estimates \eqref{eq:local-variance-product},
\eqref{eq:local-third-product}, the lower bound
\eqref{eq:local-mass-product}, the global bounds
\eqref{eq:uniform-tilted-moment} for $m=2,3$, and
\eqref{eq:I-complement-product} give
\[
 c_{\eta,a}n\leq\sigma^2\leq C_\mu n
\]
and
\[
 \frac{\rho}{\sigma^2}\leq\varepsilon+C_{\eta,a}\delta.
\]
After first fixing $\eta$ and then choosing $\delta$ sufficiently small, the
right-hand side is as small as desired.

The change-of-measure identity is
\begin{equation}\label{eq:change-measure-product}
 \mu_n(H)
 =e^{-r_H}\E_{t_1,\ldots,t_n}
 \bigl[e^{-S}\1_{\{S\geq0\}}\bigr].
\end{equation}
Let $G$ be a centered Gaussian random variable with variance $\sigma^2$.
Berry--Esseen gives
\[
 \sup_{s \in \mathbb{R}}\left|
 P_{t_1,\ldots,t_n}(S\leq s)-\Prob(G\leq s)
 \right|
 \leq C\frac{\rho}{\sigma^3}.
\]
The function $s\mapsto e^{-s}\1_{[0,\infty)}(s)$ has bounded variation, and
the standard Mills estimate gives
\[
 \E[e^{-G}\1_{\{G\geq0\}}]\asymp\frac1\sigma.
\]
It follows that
\[
 \E_{t_1,\ldots,t_n}
 \bigl[e^{-S}\1_{\{S\geq0\}}\bigr]
 \geq\frac c{\sqrt n}.
\]
Since $r_H\leq\Lambda_{\mu_n}^*(x)$, \eqref{eq:change-measure-product} proves
\eqref{eq:local-depth-product}.

For the tangent half-space at $x$, take $z=x$.  Then every coordinate lies in
$[-\eta,\eta]$, so \eqref{eq:local-variance-product} gives
\[
 an\leq\sigma^2\leq3bn,
\]
and \eqref{eq:local-third-product} gives $\rho\leq\varepsilon\sigma^2$.
The same Berry--Esseen argument, using the upper Mills estimate, gives
\[
 \E[e^{-S}\1_{\{S\geq0\}}]\leq\frac C{\sqrt n},
\]
which proves \eqref{eq:tangent-upper-product}.
\end{proof}

\subsection{Three geometric estimates}

From now on $\eta$ is fixed as in Theorem~\ref{thm:product-local-depth}.  We use
only the smaller cube
\[
 Q=[-\eta/2,\eta/2]^n,
\]
while the larger cube will always be written explicitly as
$[-\eta,\eta]^n$.  This fixed gap is the only separation that is needed.

\begin{lemma}\label{lem:product-level-area}
There are $\alpha_0,C>0$, depending only on $\mu$, such that for every
$n\geq2$ and every $0<t\leq\alpha_0n$,
\[
 \cH^{n-1}(\partial B_t\cap Q)
 \geq e^{-Cn}t^{(n-1)/2}|S^{n-1}|.
\]
\end{lemma}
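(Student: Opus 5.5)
Since $\Lambda_{\mu_n}^*(x)=\sum_i\Lambda_\mu^*(x_i)$ and, by \eqref{eq:J-ratio} together with \eqref{eq:phi-derivatives}, $\Lambda_\mu^*(s)\asymp s^2$ on $[-\eta,\eta]$, the level set $\partial B_t$ near the origin is comparable to a sphere of radius $\sqrt t$. The plan is to parametrize $\partial B_t$ radially over the sphere and bound the Jacobian of the parametrization from below.

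\textbf{Step 1: radial parametrization.} Write $\phi(x)=\Lambda_{\mu_n}^*(x)$. It is smooth, even and strictly convex, with $\phi(0)=0$. For each $\theta\in S^{n-1}$ let $\rho(\theta)$ be the unique $\rho>0$ with $\phi(\rho\theta)=t$; it exists whenever $\rho\theta$ stays inside the cube where $\phi$ is finite, and it is unique by convexity and $\phi(0)=0$. On $Q$ we have the two-sided bound
\[
c_0|y|^2\le\phi(y)\le C_0|y|^2 ,
\]
with $c_0\ge(2(x^*)^2)^{-1}$ by \eqref{eq:phi-strong}, and $C_0$ coming from the upper bound on $(\Lambda_\mu^*)''$ on $[-\eta,\eta]$. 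Hence if $\rho(\theta)\theta\in Q$ then $\sqrt{t/C_0}\le\rho(\theta)\le\sqrt{t/c_0}$.

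\textbf{Step 2: area formula.} For star-shaped level sets the standard formula is
\[
\cH^{n-1}(\partial B_t\cap Q)=\int_{\{\theta:\ \rho(\theta)\theta\in Q\}}\frac{\rho(\theta)^{n}|\nabla\phi(\rho\theta)|}{\langle\nabla\phi(\rho\theta),\rho\theta\rangle}\,d\sigma(\theta)\;\ge\;\int\rho(\theta)^{n-1}\,d\sigma(\theta),
\]
where the inequality is Cauchy--Schwarz. On the domain of integration, $\rho(\theta)^{n-1}\ge (t/C_0)^{(n-1)/2}$. So it suffices to show that the set of $\theta$ with $\rho(\theta)\theta\in Q$ has measure at least $e^{-Cn}|S^{n-1}|$.

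\textbf{Step 3: directions landing in $Q$ (the main obstacle).} A point $\rho\theta$ with $|\rho|\lesssim\sqrt t$ lies in $Q$ exactly when $\|\theta\|_\infty\le \eta/(2\rho)$. Using $\rho\le\sqrt{t/c_0}$, it is enough to have
\[
\|\theta\|_\infty\le \frac{\eta\sqrt{c_0}}{2\sqrt{t}} .
\]
For $t\le\alpha_0 n$ this threshold is at least $\frac{\eta\sqrt{c_0}}{2\sqrt{\alpha_0}}\cdot\frac1{\sqrt n}=:\frac{K}{\sqrt n}$, and $K$ is large if $\alpha_0$ is small.

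One subtlety is circularity: $\rho(\theta)$ is defined via $\phi$, which is known to be quadratic only inside the cube. This is handled by monotonicity along rays. If $\|\theta\|_\infty\le K/\sqrt n$, consider the segment $s\theta$ for $0\le s\le\sqrt{t/c_0}$. It stays in $Q$, and $\phi$ reaches at least $c_0\cdot t/c_0=t$ at its endpoint. So the crossing point $\rho(\theta)\theta$ lies in $Q$.

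It remains to show $\sigma\{\theta:\|\theta\|_\infty\le K/\sqrt n\}\ge e^{-Cn}$. Write $\theta=g/|g|$ with $g$ standard Gaussian. Then
\[
\Prob\bigl(\|g\|_\infty\le K/2,\ |g|\ge\sqrt n/2\bigr)\ge \Prob(|g_1|\le K/2)^n-e^{-cn}\ge e^{-Cn}.
\]
Alternatively, note that the event $\|g\|_\infty\le K/2$ already has probability $e^{-O(n)}$, and $|g|\ge\sqrt n/2$ holds with overwhelming conditional probability when $K$ is large, since the conditional coordinates then have variance near $1$.

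Combining the three steps gives
\[
\cH^{n-1}(\partial B_t\cap Q)\ge e^{-Cn}\,C_0^{-(n-1)/2}\,t^{(n-1)/2}\,|S^{n-1}|,
\]
which is the claim after enlarging $C$. The only delicate points are the choice of $\alpha_0$, which must make $K$ large relative to the normalization, and the conditional norm bound at the end of Step 3.
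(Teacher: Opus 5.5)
Your proof is correct and follows the paper's own argument. Both proofs parametrize $\partial B_t$ radially over the directions with $\|\theta\|_\infty\lesssim 1/\sqrt n$; the paper fixes the threshold $2/\sqrt n$ and shrinks $\alpha_0$, while you let $K/\sqrt n$ grow as $\alpha_0$ shrinks. Both use the comparison $\Lambda^*_{\mu_n}(y)\asymp|y|^2$ on $Q$ and bound the radial surface element below by $\rho^{n-1}$ times the spherical element. Your monotonicity-along-rays argument placing the crossing point in $Q$ and your Gaussian estimate for the measure of the direction set fill in details the paper only asserts; note that your $\sigma$ in Step~2 must be the unnormalized surface measure, not the normalized one used in Step~3, for the factor $|S^{n-1}|$ to come out consistently.
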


\begin{proof}
Choose $0<m\leq M<\infty$ so that
\begin{equation}\label{eq:star-normal-local-equivalence}
   \frac m2s^2\leq\Lambda_\mu^*(s)\leq\frac M2s^2,
 \qquad |s|\leq\eta/2.
\end{equation}
Let
\[
 \Omega_n=\left\{u\in S^{n-1}:\norm{u}_\infty\leq\frac2{\sqrt n}\right\}.
\]
There is an absolute constant $c_0>0$ such that
$|\Omega_n|\geq c_0^n|S^{n-1}|$.  Choose $\alpha_0>0$ so small that
\[
 2\sqrt{\frac{2\alpha_0}{m}}\leq\frac\eta2.
\]
Let $u\in\Omega_n$ and $t\leq\alpha_0n$. Then  \eqref{eq:star-normal-local-equivalence} implies that the ray $\{ru:r\geq0\}$ meets
$\partial B_t$ at a point $r_t(u)u \in Q $, with
\[
 \sqrt{\frac{2t}{M}}\leq r_t(u)\leq\sqrt{\frac{2t}{m}}.
\] The radial surface element is at least $r_t(u)^{n-1}du$.  Hence
\[
 \cH^{n-1}(\partial B_t\cap Q)
 \geq c_0^n\left(\frac{2t}{M}\right)^{(n-1)/2}|S^{n-1}|,
\]
as required.
\end{proof}

\begin{lemma}\label{lem:product-finite-set}
Fix $0<a<b<r^*$.  There exists $L_0>0$ such that, for every fixed
$L\geq L_0$, there are constants $C_L,D_L>1$ with the following property.
For all sufficiently large $n$, if
\[
 an\leq t\leq bn,
 \qquad
 x\in\partial B_t\cap Q,
\]
then there is a finite set $W_x$ with $|W_x|\leq D_L^n$ satisfying
\begin{equation}\label{eq:Wx-location}
 W_x\subset
 \left\{w\in[-\eta,\eta]^n:
 t-C_L\sqrt n\leq\Lambda_{\mu_n}^*(w)\leq t-\frac L4
 \right\},
\end{equation}
and every open half-space $G$ such that
\begin{equation}\label{eq:halfspace-penetrates}
 x\in G,
 \qquad
 G\cap B_{t-2L}\cap[-\eta,\eta]^n\neq\varnothing
\end{equation}
meets $W_x$.  Moreover,
\[
 \log D_L=O_{\mu,\eta}(\log(1+L)).
\]
\end{lemma}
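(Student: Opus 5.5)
We will build $W_x$ as an $\varepsilon$-net for half-spaces with respect to a suitable probability measure $\nu_x$, using \cite[Corollary~3.7]{HausslerWelzl}. Since half-spaces in $\R^n$ have VC dimension $n+1$, an $\varepsilon$-net of size $C(n/\varepsilon)\log(1/\varepsilon)$ exists. Taking $\varepsilon=e^{-c(L)n}$ with $c(L)=O(\log(1+L))$ then gives $|W_x|\leq D_L^n$ with $\log D_L=O(\log(1+L))$.

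\textbf{The reference measure.} Let $A_x$ be the region
\[
A_x=\Bigl\{w\in[-\eta,\eta]^n:\ t-C_L\sqrt n\leq\Lambda_{\mu_n}^*(w)\leq t-\tfrac L4,\ \norm{w-x}_2\leq R\sqrt{L}\Bigr\},
\]
and let $\nu_x$ be the normalized Lebesgue measure on $A_x$. The radius $R$ is a constant; the exact form of the localization may need adjusting. Because $W_x\subset A_x$, the location requirement \eqref{eq:Wx-location} holds automatically. What remains is the net property: every open half-space $G$ satisfying \eqref{eq:halfspace-penetrates} must have $\nu_x(G)\geq\varepsilon$.

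\textbf{The geometric estimate.} Let $G$ contain $x$ and a point $y\in B_{t-2L}\cap[-\eta,\eta]^n$. Then $G$ contains a neighbourhood of the segment $[x,y]$.
\begin{itemize}[leftmargin=*]
\item The Hessian of $\Lambda_{\mu_n}^*$ is diagonal with entries in $[m,M]$ on $[-\eta,\eta]^n$, by \eqref{eq:star-normal-local-equivalence} extended to $[-\eta,\eta]$ and \eqref{eq:phi-strong}. So the function behaves like a quadratic form comparable to $\norm{\cdot}_2^2$ there.
\item Use convexity of $B_{t-2L}$ and of $G$. We may replace $y$ by a point $y'$ on the segment $[x,y]$ with $\Lambda_{\mu_n}^*(y')\leq t-L$ and $\norm{x-y'}_2\lesssim\sqrt{L}$ (up to $\sqrt{n}$-free constants). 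This works because along the segment $\Lambda^*$ drops from $t$ to below $t-2L$. Its gradient at $x$ has norm $\asymp\sqrt{t}\asymp\sqrt n$, so the drop happens within distance about $L/\sqrt n$ in the inward normal direction. That distance is even smaller, so the $\sqrt L$ localization suffices.
\item Consider a small Euclidean ball around $y'$, of radius $\asymp\sqrt L/\sqrt n$ times a constant. It lies in $\{t-C_L\sqrt n\leq\Lambda^*\leq t-L/4\}$, by the gradient bound $\lesssim\sqrt n$ together with the Hessian bound.
\item The half of this ball on the $G$ side has volume at least $e^{-O(n\log(1+L))}\vol(A_x)$. To see this, compare with $\vol(A_x)\leq\vol(\sqrt L R B_2^n)$.
\item The radii must be chosen so that the ratio of volumes is $(c/\sqrt{L}\,)^n$ up to $e^{O(n)}$. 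It is probably cleaner to localize $A_x$ to a ball of radius comparable to $L/\sqrt n$ around $x$, intersected with the shell. Then one shows that the shell occupies a fixed exponential fraction of the relevant ball, and that $G\cap A_x$ contains a cap of comparable size.
\end{itemize}

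\textbf{Expected main obstacle.} The delicate step is getting the right scale. At a point where the gradient has size $\sqrt n$, the shell $\{t-C_L\sqrt n\leq\Lambda^*\leq t-L/4\}$ has thickness about $L/\sqrt n$ in the normal direction. The tangential curvature radius is about $\sqrt n$. The claim that $G$ captures an $e^{-O(n\log L)}$ fraction of $A_x$ must hold uniformly over all penetrating half-spaces, including those that barely tilt inward.

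I would try to handle this as follows. In coordinates adapted to $x$, approximate $\Lambda^*$ by its second-order Taylor expansion; third derivatives are bounded on the cube, so the error is $O(\norm{h}^3)$. The penetration condition then says that $G$ contains a point at normal depth $\gtrsim L/\sqrt n$ within tangential distance $O(\sqrt{L})$. Choosing $C_L$ large absorbs the Taylor errors, and a direct cap-volume computation in these coordinates gives the $\varepsilon$-bound.
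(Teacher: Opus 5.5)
Your overall framework matches the paper's: an $\varepsilon$-net (Haussler--Welzl) for normalized Lebesgue measure on a localized shell $D_x=\{w\in[-\eta,\eta]^n:\ \norm{w-x}_2\le R_L,\ t-AR_L\sqrt n\le\Lambda_{\mu_n}^*(w)\le t-L/4\}$ with $R_L\asymp\sqrt L$. Everything then reduces to showing $\vol_n(G\cap D_x)\ge B_L^{-n}\vol_n(D_x)$, with $B_L=O(\sqrt{1+L})$, for every penetrating $G$.

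Your justification of $\norm{x-y'}_2\lesssim\sqrt L$ is wrong: along a nearly tangential segment the drop does not happen within distance $L/\sqrt n$. The bound is still true by strong convexity. At the first point $y'\in[x,y]$ with $\Lambda_{\mu_n}^*(y')=t-L$, convexity along the segment gives $\ip{\nabla\Lambda_{\mu_n}^*(y')}{x-y'}\ge0$, hence $t\ge t-L+\tfrac m2\norm{x-y'}_2^2$.

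\textbf{The gap is the volume estimate itself, which is the entire content of the lemma.} A ball of radius $\asymp\sqrt{L/n}$ occupies only a fraction $n^{-n/2}e^{O(n)}$ of $R\sqrt L\,B_2^n$. That gives $|W_x|\approx n^{n/2}e^{O(n)}$, not $D_L^n$. The union bound in Theorem~\ref{thm:product-many-facets}, where each $w$ is missed with probability $(Be^{2-B})^n$ for a fixed $B$, then collapses.

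Your fallback, localizing to radius $O(L/\sqrt n)$ around $x$, fails the other way. Let $e$ be the outer unit normal at $x$, $g=\norm{\nabla\Lambda_{\mu_n}^*(x)}_2\asymp\sqrt n$, and $e'\perp e$ a delocalized unit vector. Take $G=\{w:\ip{\cos\phi\,e+\sin\phi\,e'}{w-x}>0\}$ with $\tan\phi\asymp\sqrt L/g$.
\begin{itemize}
\item Using the Hessian bound $M$, $G$ meets $B_{t-2L}\cap[-\eta,\eta]^n$ at a point $x-\alpha e+\tau e'$ with $\alpha\asymp L/\sqrt n$ and $\tau\asymp\sqrt L$.
\item Any $w\in G$ with $\Lambda_{\mu_n}^*(w)\le t-L/4$ satisfies $\ip{e}{w-x}\le-L/(4g)$, hence $\ip{e'}{w-x}\ge\cot\phi\,L/(4g)\gtrsim\sqrt L$.
\end{itemize}
So $G$ misses your small region entirely. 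Moreover, inside $B(x,R_L)$ the set $G\cap D_x$ lies in a slab of width $O(L/\sqrt n)$ normal to $e$. Any Euclidean ball contained in it therefore has radius $O(L/\sqrt n)$ and is useless. Your closing ``direct cap-volume computation'' never names the body that carries the mass.

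\textbf{What is missing is an anisotropic body.} Take a point $c\in G$ with $\Lambda_{\mu_n}^*(c)\le t-L/2$, and set $g_c=\norm{\nabla\Lambda_{\mu_n}^*(c)}_2\asymp\sqrt n$, $e_c=\nabla\Lambda_{\mu_n}^*(c)/g_c$. Near $c$, the sublevel set $\{\Lambda_{\mu_n}^*\le t-L/4\}$ is thin only along $e_c$. A displacement $u\perp e_c$ of Euclidean length $r_0=\eta/32$ (a constant, not $O(1/\sqrt n)$) costs only $\tfrac M2r_0^2$, which is absorbed once $L\ge L_0$.

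The paper therefore takes the ellipsoid $E_G=c+\{u+se_c:\ u\perp e_c,\ \norm{u}_2^2/r_0^2+s^2/\rho^2\le1\}$ with $\rho=L/(16g_c)$.
\begin{itemize}
\item Taylor's formula gives $\Lambda_{\mu_n}^*\le t-L/2+L/16+\tfrac M2(r_0^2+\rho^2)\le t-L/4$ on $E_G$.
\item Central symmetry about $c\in G$ puts half of $E_G$ in $G$.
\item $\vol_n(E_G)/\vol_n(D_x)\ge(r_0/R_L)^{n-1}\rho/R_L\ge B_L^{-n}$, because the small factor $\rho/R_L\asymp\sqrt{L/n}$ appears only once.
\end{itemize}

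Fitting $n-1$ semi-axes of constant length into $[-\eta,\eta]^n$ requires a centre away from the boundary of the cube, which $y'$ need not be. So take $c=(x+y')/2$. It lies in $G$ and in $[-3\eta/4,3\eta/4]^n$ because $x\in Q$, and $\Lambda_{\mu_n}^*(c)\le t-L/2$ by convexity. This is exactly where the gap between $Q$ and $[-\eta,\eta]^n$ is used. The paper produces $y'$ from a minimizer of $\Lambda_{\mu_n}^*$ on $\overline G\cap[-\eta,\eta]^n$, but your segment $[x,y]$ works equally well.
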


\begin{proof}
Put
\[
 m=\min_{|s|\leq\eta}(\Lambda_\mu^*)''(s)>0,
 \quad
 M=\max_{|s|\leq\eta}(\Lambda_\mu^*)''(s)<\infty,
 \quad
 A=\max_{|s|\leq\eta}|(\Lambda_\mu^*)'(s)|<\infty.
\]
Fix $G$ satisfying \eqref{eq:halfspace-penetrates}, and let $z$ minimize
$\Lambda_{\mu_n}^*$ on $\overline G\cap[-\eta,\eta]^n$.  Then
$\Lambda_{\mu_n}^*(z)\leq t-2L$.  Since $x\in Q$, the segment $[z,x]$ lies in
$[-\eta,\eta]^n$.  Choose $y\in[z,x]$ with
\[
 \Lambda_{\mu_n}^*(y)=t-L.
\]
The function $s\mapsto\Lambda_{\mu_n}^*((1-s)z+sx)$ is non-decreasing, so
$\ip{\nabla\Lambda_{\mu_n}^*(y)}{x-y}\geq0$.  Strong convexity gives
\[
 \norm{x-y}_2\leq\sqrt{\frac{2L}{m}}.
\]
Set $c=(x+y)/2$.  Then $c\in G$,
\[
 c\in[-3\eta/4,3\eta/4]^n,
 \qquad
 \Lambda_{\mu_n}^*(c)\leq t-\frac L2,
\]
and by integrating
\[
 \Lambda_{\mu_n}^*(c)\geq t-A\sqrt{\frac{L}{2m}}\sqrt n.
\]
Since $t\geq an$, this is at least $an/2$ for all sufficiently large $n$.
The function $((\Lambda_\mu^*)')^2/\Lambda_\mu^*$ has a positive continuous extension at
$0$, hence
\[
 c_1\sqrt n\leq\norm{\nabla\Lambda_{\mu_n}^*(c)}_2\leq A\sqrt n.
\]
Write
\[
 g=\norm{\nabla\Lambda_{\mu_n}^*(c)}_2,
 \qquad
 e=\frac{\nabla\Lambda_{\mu_n}^*(c)}g,
 \qquad
 r_0=\frac\eta{32},
 \qquad
 \rho=\frac{L}{16g}.
\]
For fixed $L$ and large $n$, we have $\rho\leq r_0$.  Consider
\[
 E_G=c+\left\{u+se:u\perp e,\,
 \frac{\norm{u}_2^2}{r_0^2}+\frac{s^2}{\rho^2}\leq1\right\}.
\]
The margin between $c$ and the boundary of $[-\eta,\eta]^n$ shows that
$E_G\subset[-\eta,\eta]^n$.  Taylor's formula yields
\[
 \Lambda_{\mu_n}^*(c+u+se)
 \leq t-\frac L2+\frac L{16}
       +\frac M2(r_0^2+\rho^2)
 \leq t-\frac L4
\]
provided $L\geq L_0(\mu,\eta)$.  Also
\[
 E_G\subset B(x,R_L),
 \qquad
 R_L=\sqrt{\frac{L}{2m}}+2r_0.
\]
For $w\in E_G$, the gradient bound on $[-\eta,\eta]^n$ gives
\[
 \Lambda_{\mu_n}^*(w)\geq t-AR_L\sqrt n.
\]
Define, independently of $G$,
\[
 D_x=\left\{w\in[-\eta,\eta]^n:
 \norm{w-x}_2\leq R_L,
 \quad
 t-AR_L\sqrt n\leq\Lambda_{\mu_n}^*(w)\leq t-L/4
 \right\}.
\]
Then $E_G\subset D_x$.  Since $E_G$ is centrally symmetric about $c\in G$,
at least half of it lies in $G$.  If $\omega_n=\vol_n(B_2^n)$, then
\[
 \vol_n(E_G)=\omega_n r_0^{n-1}\rho,
 \qquad
 \vol_n(D_x)\leq\omega_nR_L^n.
\]
Since $g\leq A\sqrt n$,
\[
 \frac{\vol_n(D_x\cap G)}{\vol_n(D_x)}
 \geq \frac12\left(\frac{r_0}{R_L}\right)^{n-1}
       \frac{L}{16AR_L\sqrt n}
 \geq B_L^{-n},
\]
where $B_L=O_{\mu,\eta}(\sqrt{1+L})$.  Apply the VC
$\varepsilon$-net theorem for half-spaces \cite{HausslerWelzl} to normalized
Lebesgue measure on $D_x$, with $\varepsilon=B_L^{-n}$.  It gives a set
$W_x\subset D_x$ meeting every such half-space and satisfying
\[
 |W_x|\leq C(n+1)B_L^n\log(B_L^n)\leq D_L^n.
\]
This proves the lemma.
\end{proof}

\begin{lemma}\label{lem:product-exterior}
Fix $L>0$.  For all sufficiently large $n$, let $t>2L$ and let $H$ be a
closed half-space satisfying
\[
 \operatorname{int}H\cap
 \bigl(B_{t-2L}\cap[-\eta,\eta]^n\bigr)=\varnothing.
\]
Then
\[
 \cH^{n-1}(\partial B_t\cap Q\cap H)
 \leq\left(\frac{4L}{m}\right)^{(n-1)/2}|S^{n-1}|,
\]
where
\[
 m=\min_{|s|\leq\eta}(\Lambda_\mu^*)''(s)>0.
\]
\end{lemma}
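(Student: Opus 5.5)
The plan is to show that $\partial B_t\cap Q\cap H$ lies in a Euclidean ball of radius $R=\sqrt{4L/m}$. The bound then follows from the monotonicity of surface area for convex bodies. Write $f=\Lambda_{\mu_n}^*$ and $K=H\cap[-\eta,\eta]^n$. If $\partial B_t\cap Q\cap H=\varnothing$ there is nothing to prove, so assume $K\neq\varnothing$.

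\emph{Step 1: the anchor point $z$.} Since $K$ is compact and convex, and $f$ is continuous and strictly convex on $[-\eta,\eta]^n$, there is a unique $z\in K$ minimizing $f$ on $K$. I claim $f(z)\geq t-2L$. Otherwise $f<t-2L$ on a relative neighbourhood of $z$ in $[-\eta,\eta]^n$. Every point of $K$ is a limit of points of $\operatorname{int}H\cap[-\eta,\eta]^n$: move slightly toward a point of $\operatorname{int}H\cap\operatorname{int}[-\eta,\eta]^n$, which exists because $x\in Q$ lies well inside the cube. Hence some such point lies in $B_{t-2L}$, contradicting $\operatorname{int}H\cap B_{t-2L}\cap[-\eta,\eta]^n=\varnothing$.

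\emph{Step 2: the diameter bound.} Let $x\in\partial B_t\cap Q\cap H$, so $f(x)=t$ and $x\in K$. By convexity of $K$ and minimality of $z$, the function $s\mapsto f((1-s)z+sx)$ is minimized at $s=0$ on $[0,1]$. Along the segment, which stays in $[-\eta,\eta]^n$, the second derivative is at least $m\|x-z\|_2^2$ by the definition of $m$ and the product structure $f(y)=\sum\Lambda_\mu^*(y_i)$. Integrating, and using that the one-sided derivative at $0$ is nonnegative, gives
\[
t=f(x)\geq f(z)+\frac m2\norm{x-z}_2^2\geq t-2L+\frac m2\norm{x-z}_2^2 .
\]
Hence $\norm{x-z}_2\leq\sqrt{4L/m}=R$, so $\partial B_t\cap Q\cap H\subset B(z,R)$.

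\emph{Step 3: the area comparison.} $B_t$ is a convex body, and $S:=\partial B_t\cap Q\cap H$ is a subset of $\partial B_t$ contained in $B(z,R)$. Therefore $S\subset\partial\bigl(B_t\cap B(z,R)\bigr)$: its points lie in $B_t\cap B(z,R)$ but not in the interior of $B_t$. The convex body $B_t\cap B(z,R)$ is contained in $B(z,R)$. Monotonicity of surface area under inclusion of convex bodies (Cauchy's formula) gives
\[
\cH^{n-1}(S)\leq\cH^{n-1}\bigl(\partial(B_t\cap B(z,R))\bigr)\leq\cH^{n-1}(\partial B(z,R))=R^{n-1}|S^{n-1}| .
\]
This is the claimed bound. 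If $B_t\cap B(z,R)$ happens to be lower-dimensional, then $S$ lies in a hyperplane section of $B(z,R)$, whose $(n-1)$-measure is even smaller.

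I expect the main obstacle to be Step 1. The hypothesis only concerns the interior of $H$, so one needs the density argument to conclude $f(z)\geq t-2L$ rather than a strict-inequality version. The largeness of $n$ plays no real role; it only keeps the setting consistent with the earlier lemmas.
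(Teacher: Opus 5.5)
Your proof is correct. Its overall structure matches the paper's: find an anchor point $z$ with $\Lambda_{\mu_n}^*(z)\geq t-2L$ and $\langle\nabla\Lambda_{\mu_n}^*(z),y-z\rangle\geq0$ on the relevant set, then apply strong convexity and monotonicity of surface area. The difference is in how you obtain the anchor.

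The paper translates $H$ until $\partial H$ supports $B_{t-2L}\cap[-\eta,\eta]^n$ at some $z$. It then writes the normal as $v=\lambda\nabla\Lambda_{\mu_n}^*(z)+w$ with $w\in N_{[-\eta,\eta]^n}(z)$, using the normal-cone sum rule (justified because the origin is interior to both sets). Finally it uses the quantitative gap between $Q$ and $[-\eta,\eta]^n$, in the form $\langle w,y-z\rangle\leq-\frac\eta2\norm{w}_1$, to force $\lambda>0$. This gives $\Lambda_{\mu_n}^*(z)=t-2L$ exactly and puts $Q\cap H$ inside the exterior tangent half-space of $B_{t-2L}$ at $z$.

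You instead minimize over $K=H\cap[-\eta,\eta]^n$ itself. The first-order condition along segments in $K$ then follows directly from constrained minimality, and your density argument gives $\Lambda_{\mu_n}^*(z)\geq t-2L$. This route is more elementary. It needs no normal-cone calculus or constraint qualification and no enlargement of $H$. It uses the separation between $Q$ and the cube only qualitatively, to produce a point of $\operatorname{int}H\cap\operatorname{int}[-\eta,\eta]^n$.

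The paper's route gives a sharper geometric picture: $z$ lies on $\partial B_{t-2L}$ and $H$ is tangent there. The lemma does not need this. Your Step 3 also spells out the inclusion $\partial B_t\cap Q\cap H\subset\partial\bigl(B_t\cap B(z,R)\bigr)$ and handles the degenerate case, both of which the paper leaves implicit when it invokes monotonicity of surface area.
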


\begin{proof}
Enlarging $H$ if necessary, assume that its boundary supports
$B_{t-2L}\cap[-\eta,\eta]^n$ at a point $z$.  Thus
\[
 H=\{y:\ip{v}{y-z}\geq0\}
\]
for some $v\neq0$.  Since the origin belongs to the interior of both sets, the
normal-cone sum rule gives
\[
 v=\lambda\nabla\Lambda_{\mu_n}^*(z)+w,
 \qquad
 \lambda\geq0,
 \quad
 w\in N_{[-\eta,\eta]^n}(z).
\]
Let $y\in Q\cap H$.  If $w\neq0$, the fixed gap between the two cubes and the coordinate description of the normal cone of a cube, give
\[
 \ip{w}{y-z}\leq-\frac\eta2\norm{w}_1<0.
\]
Hence $\lambda>0$ and
$\ip{\nabla\Lambda_{\mu_n}^*(z)}{y-z}\geq0$.  The same conclusion is immediate if
$w=0$.  Since $\lambda>0$, we also have $\Lambda_{\mu_n}^*(z)=t-2L$.  Therefore, if
$y\in B_t\cap Q\cap H$, strong convexity gives
\[
 t\geq\Lambda_{\mu_n}^*(y)
 \geq t-2L+\frac m2\norm{y-z}_2^2.
\]
Thus
\[
 B_t\cap Q\cap H
 \subset z+2\sqrt{\frac Lm}\,B_2^n.
\]
By monotonicity of surface area under inclusion for convex bodies,
\[
 \cH^{n-1}(\partial B_t\cap Q\cap H)
 \leq\left(\frac{4L}{m}\right)^{(n-1)/2}|S^{n-1}|.
\]
\end{proof}

\subsection{The theorem for product measures}

\begin{theorem}\label{thm:product-many-facets}
There exists $\alpha_0=\alpha_0(\mu)>0$ with the following property.  For
every fixed $0<\alpha<\alpha_0$, set
\[
 N=\lfloor e^{\alpha n}\rfloor,
\]
let $X_1,\ldots,X_N$ be independent random vectors with law $\mu_n$, and set
\[
 P_N=\conv\{X_1,\ldots,X_N\}.
\]
Then, for all sufficiently large $n$,
\[
 \E f_{n-1}(P_N)
 \geq n^{n/2}e^{-C_{\mu,\alpha}n}
 \geq(c_{\mu,\alpha}n)^{n/2}.
\]
\end{theorem}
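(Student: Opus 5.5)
The plan is to compare the total surface area of a Cram\'er level surface "exposed" by $P_N$ with the area any single facet can expose. Fix $0<\alpha<\alpha_0$, where $\alpha_0$ is at most the constant of Lemma~\ref{lem:product-level-area} and smaller than $r^*$. Set
\[
 t=\log N-\tfrac12\log n-\log(Kn),
\]
where $K$ is a large constant to be chosen. Then $t=\alpha n-O(\log n)$, so for large $n$ we have $an\le t\le bn$ with $a=\alpha/2$, $b=\alpha$, and also $t\le\alpha_0 n$. Theorem~\ref{thm:product-local-depth} and Lemmas~\ref{lem:product-finite-set}, \ref{lem:product-exterior} therefore all apply at level $t$, with $L\ge L_0$ fixed later. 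For $x\in\partial B_t\cap Q$ let $E_x$ be the event that $H_x^+$ contains no sample point and that $W_x\subset P_N$, where $W_x$ is the finite set of Lemma~\ref{lem:product-finite-set}.

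\textbf{Step 1: $\Prob(E_x)\ge e^{-C_{K,L}n}$.} By \eqref{eq:tangent-upper-product},
\[
 \Prob(\text{no }X_i\in H_x^+)\ge\Bigl(1-\frac{Ce^{-t}}{\sqrt n}\Bigr)^N\ge e^{-2CN e^{-t}/\sqrt n}=e^{-2CKn}.
\]
Each $w\in W_x$ lies in $[-\eta,\eta]^n$ with
\[
 an/2\le t-C_L\sqrt n\le\Lambda^*_{\mu_n}(w)\le t-L/4 .
\]
So \eqref{eq:local-depth-product}, applied with the constants $a/2,b$, gives
\[
 q_n(w)\ge c\,e^{-t+L/4}/\sqrt n,
 \qquad\text{hence}\qquad
 Nq_n(w)\ge cKe^{L/4}n .
\]
I would then invoke \cite[Proposition~13]{HLO}, which bounds $\Prob(w\notin P_N)$ by a binomial lower tail of the form $\sum_{k<n}\binom Nk q^k(1-q)^{N-k}$ with $q=q_n(w)$. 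This yields $\Prob(w\notin P_N)\le e^{-c'Nq_n(w)}$ once $Nq_n(w)\ge C n$. A union bound over $|W_x|\le D_L^n$ then gives
\[
 \Prob(W_x\not\subset P_N)\le D_L^n e^{-c'cKe^{L/4}n}.
\]
Since $\log D_L=O(\log(1+L))$, choosing $L$ and then $K$ large makes this at most $\tfrac12e^{-2CKn}$. Combining the two estimates gives $\Prob(E_x)\ge\tfrac12 e^{-2CKn}$. The main obstacle is this step. One must check that the exact form of \cite[Proposition~13]{HLO} (and the fact that the depth bounds the mass of every half-space containing $w$) converts $Nq\gtrsim Kn$ into failure probability $e^{-\Omega(Kn)}$, strong enough to beat $D_L^n$ while the tangent cost $e^{-2CKn}$ stays merely exponential. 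The factor $\log(Kn)$ in $t$ is exactly the device that balances the two.

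\textbf{Step 2: on $E_x$, the point $x$ is exposed by a facet with small exterior area.} On $E_x$ we have $x\notin P_N$, since $P_N$ misses the tangent half-space $H_x^+$, which contains $x$. Hence some facet $F$ of $P_N$ has open outer half-space $G_F$ (the complement of the supporting closed half-space of $F$ containing $P_N$) with $x\in G_F$. Since $W_x\subset P_N$, the set $G_F$ misses $W_x$. By Lemma~\ref{lem:product-finite-set}, $G_F\cap B_{t-2L}\cap[-\eta,\eta]^n=\varnothing$, so $H_F=\overline{G_F}$ satisfies the hypothesis of Lemma~\ref{lem:product-exterior}. Lemma~\ref{lem:product-exterior} then gives
\[
 \cH^{n-1}(\partial B_t\cap Q\cap H_F)\le(4L/m)^{(n-1)/2}|S^{n-1}|.
\]
Consequently, pointwise,
\[
 \int_{\partial B_t\cap Q}\1_{E_x}\,d\cH^{n-1}(x)\le f_{n-1}(P_N)\,(4L/m)^{(n-1)/2}|S^{n-1}|.
\]

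\textbf{Step 3: conclusion.} Take expectations and use Fubini; the measurability of $(x,\omega)\mapsto\1_{E_x}$ is routine, since $W_x$ can be chosen measurably, for instance from a fixed fine net. Together with Lemma~\ref{lem:product-level-area} this gives
\[
 \E f_{n-1}(P_N)\ge\frac{\tfrac12e^{-2CKn}\,e^{-Cn}t^{(n-1)/2}|S^{n-1}|}{(4L/m)^{(n-1)/2}|S^{n-1}|}\ge n^{n/2}e^{-C_{\mu,\alpha}n},
\]
because $t\ge\alpha n/2$ and $L,K$ depend only on $\mu$ and $\alpha$. The final form $(c_{\mu,\alpha}n)^{n/2}$ follows by absorbing $e^{-C_{\mu,\alpha}n}$ into the base.
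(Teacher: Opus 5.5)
Your argument is the paper's proof: the same surface $\Sigma_t=\partial B_t\cap Q$, the tangent half-space $H_x^+$ to pay for excluding $x$, \cite[Proposition~13]{HLO} with a union bound over $W_x$, and Lemma~\ref{lem:product-exterior} to bound the area one facet can expose. Step~1 is fine. In the form recorded in the paper, $\Prob(w\notin P_N)\le[r\exp(A_q(1+q-r))]^n$ with $r=Nq/n$ and $A_q\ge1$, and this is $e^{-\Omega(Nq)}$ once $r$ is large. Your constant $K$ is harmless but not needed. The paper takes $t=\log N-\frac32\log n$, i.e.\ $K=1$. The tangent cost $e^{-C_2n}$ does not depend on $L$, while $Nq_n(w)/n\ge ce^{L/4}$, so enlarging $L$ alone beats $D_L^n$.

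There is one real omission: full-dimensionality. In Step~2 you infer from $x\notin P_N$ that $x$ lies in the open outer half-space $G_F$ of some facet $F$. That inference uses $P_N=\bigcap_F(\R^n\setminus G_F)$, which holds only when $\dim P_N=n$. For atomic $\mu$, which includes the Bernoulli and finite-alphabet cases, $P_N$ is lower-dimensional with positive probability. On that event $f_{n-1}(P_N)\le1$, and your pointwise inequality $\int_{\Sigma_t}\1_{E_x}\,d\cH^{n-1}\le f_{n-1}(P_N)(4L/m)^{(n-1)/2}|S^{n-1}|$ has no justification.

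So you must add ``$P_N$ is full-dimensional'' to $E_x$ and show its failure probability is negligible. The paper does this in three steps:
\begin{enumerate}[label=(\roman*)]
\item Every affine hyperplane $H$ has $\mu_n(H)\le p_\mu=\sup_a\mu(\{a\})<1$; condition on all coordinates except one with non-zero coefficient.
\item Hence, until the affine span is all of $\R^n$, each new sample raises its dimension with conditional probability at least $1-p_\mu$.
\item A binomial tail then gives $\Prob(P_N\text{ not full-dimensional})\le e^{-c_\mu N}$, which is far below $e^{-2CKn}$.
\end{enumerate}
Alternatively, the $\varepsilon$-net built in the proof of Lemma~\ref{lem:product-finite-set} has $\varepsilon<1/2$ for a measure that charges no hyperplane. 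It therefore cannot lie in a hyperplane, so $W_x\subset P_N$ already forces full dimension. This uses the construction of the lemma, not its statement.

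Separately, your claim that $W_x$ can be chosen measurably in $x$ is not justified. The paper avoids the issue by putting into $E_x$ only the consequence of $W_x\subset P_N$ that is actually used: every open half-space containing $x$ and meeting $B_{t-2L}\cap[-\eta,\eta]^n$ meets $P_N$. This condition can be written through support functions, so it is jointly measurable in $(x,\omega)$, and its probability is at least that of your event.
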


\begin{proof}
Choose $\alpha_0>0$ so small that $2\alpha_0<r^*$ and
Lemma~\ref{lem:product-level-area} applies for $t\leq2\alpha_0n$.  Fix
$0<\alpha<\alpha_0$ and set
\begin{equation}\label{eq:N-t-product}
 N=\lfloor e^{\alpha n}\rfloor,
 \qquad
 t=\log N-\frac32\log n.
\end{equation}
For all sufficiently large $n$,
\[
 \frac\alpha2n\leq t\leq\alpha n.
\]
Let
\[
 \Sigma_t=\partial B_t\cap Q.
\]
By Lemma~\ref{lem:product-level-area},
\begin{equation}\label{eq:Sigma-area-product}
 \cH^{n-1}(\Sigma_t)
 \geq e^{-C_1n}t^{(n-1)/2}|S^{n-1}|.
\end{equation}

Fix $x\in\Sigma_t$.  Theorem~\ref{thm:product-local-depth} gives
\[
 p_x:=\mu_n(H_x^+)
 \leq C\frac{e^{-t}}{\sqrt n}
 =C\frac nN.
\]
If none of $X_1,\ldots,X_N$ lies in $H_x^+$, then $x\notin P_N$.  Hence
\begin{equation}\label{eq:missing-product}
 \Prob(x\notin P_N)
 \geq(1-p_x)^N
 \geq e^{-C_2n}.
\end{equation}

Fix $L\geq L_0$ as in Lemma~\ref{lem:product-finite-set}.  If $w\in W_x$,
then \eqref{eq:Wx-location} and the choice of $t$ place $w$, for all large $n$,
in the range of Theorem~\ref{thm:product-local-depth}.  Thus
\[
 q_n(w)
 \geq c\frac{e^{-\Lambda_{\mu_n}^*(w)}}{\sqrt n}
 \geq ce^{L/4}\frac{e^{-t}}{\sqrt n}
 =ce^{L/4}\frac nN.
\]
Also $\Lambda_{\mu_n}^*(w)>0$, which implies that $w \neq 0$;
by symmetry $q_n(w)\leq1/2$.  Put
\[
 q=q_n(w),
 \qquad
 r=\frac{Nq}{n}.
\]
For $L$ sufficiently large, $r\geq B:=ce^{L/4}>2$.  Proposition~13 of
\cite{HLO} yields
\[
 \Prob(w\notin P_N)
 \leq\left[r\exp\bigl(A_q(1+q-r)\bigr)\right]^n,
 \qquad
 A_q=\frac1q\log\frac1{1-q}.
\]
Since $A_q\geq1,\,q\leq1/2$ and the function $s \mapsto se^{2-s}$ is decreasing for $s \geq 1$,
\[
 \Prob(w\notin P_N)\leq(Be^{2-B})^n.
\]
Taking the union over $W_x$ and using
$\log D_L=O(\log(1+L))$, we may choose $L$ so large that
\begin{equation}\label{eq:Wx-in-product}
 \Prob(W_x\not\subset P_N)\leq e^{-(C_2+3)n}.
\end{equation}

We also need full dimensionality.  Set
\[
 p_\mu=\sup_{a\in\R}\mu(\{a\})<1.
\]
Every proper affine hyperplane $H\subset\R^n$ satisfies
$\mu_n(H)\leq p_\mu\,$: condition on all but one coordinate corresponding to a
non-zero coefficient in the equation defining $H$.  Therefore, until the affine
span of the sampled points has dimension $n$, the next point increases its
dimension with conditional probability at least $1-p_\mu$.  A binomial
lower-tail estimate gives
\begin{equation}\label{eq:full-dimensional-product}
 \Prob(P_N\text{ is not full-dimensional})\leq e^{-c_\mu N}.
\end{equation}

For $x\in\Sigma_t$, let $E_x$ be the event that $x\notin P_N$, that $P_N$ is
full-dimensional, and that every open half-space containing $x$ and meeting
$B_{t-2L}\cap[-\eta,\eta]^n$ also meets $P_N$.  By
\eqref{eq:missing-product}, \eqref{eq:Wx-in-product},
\eqref{eq:full-dimensional-product}, and Lemma~\ref{lem:product-finite-set},
\begin{equation}\label{eq:Ex-product}
 \Prob(E_x)\geq\frac12e^{-C_2n}
\end{equation}
for all sufficiently large $n$.  The event is measurable; failure of the last condition is equivalent to
\[
 \max_{u\in S^{n-1}}
 \left(
 \min\left\{\ip{u}{x},
 h_{B_{t-2L}\cap[-\eta,\eta]^n}(u)\right\}
 -h_{P_N}(u)
 \right)>0.
\]
Tonelli's theorem and \eqref{eq:Sigma-area-product} therefore give
\begin{equation}\label{eq:expected-area-product}
 \E\,\cH^{n-1}\{x\in\Sigma_t:E_x\}
 \geq e^{-C_3n}t^{(n-1)/2}|S^{n-1}|.
\end{equation}

Let $F$ be a facet of a full-dimensional realization of $P_N$, and let $H_F$
be its closed exterior half-space.  If $E_x$ occurs and $x$ strictly violates
the facet inequality of $F$, then
$x\in\operatorname{int}H_F$.  Since
$\operatorname{int}H_F\cap P_N=\varnothing$, the defining property of $E_x$
implies
\[
 \operatorname{int}H_F\cap
 \bigl(B_{t-2L}\cap[-\eta,\eta]^n\bigr)=\varnothing.
\]
Lemma~\ref{lem:product-exterior} gives
\[
 \cH^{n-1}(\Sigma_t\cap H_F)
 \leq C_L^n|S^{n-1}|.
\]
Every point counted on the left-hand side of
\eqref{eq:expected-area-product} lies outside $P_N$ and therefore strictly
violates at least one facet inequality.  Hence
\[
 \cH^{n-1}\{x\in\Sigma_t:E_x\}
 \leq f_{n-1}(P_N)C_L^n|S^{n-1}|.
\]
Taking expectations and using \eqref{eq:expected-area-product},
\[
 \E f_{n-1}(P_N)
 \geq e^{-C_4n}t^{(n-1)/2}.
\]
Since $t\geq\alpha n/2$, this is
\[
 \E f_{n-1}(P_N)
 \geq n^{n/2}e^{-C_{\mu,\alpha}n},
\]
after changing the constant in the exponential.
\end{proof}

\begin{corollary}[The $0/1$ case]\label{cor:01}
There is an absolute constant $c>0$ such that, for every sufficiently large
$n$, there exists a $0/1$ polytope $P\subset\R^n$ with
\[
 f_{n-1}(P)\geq(cn)^{n/2}.
\]
\end{corollary}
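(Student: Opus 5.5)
Apply Theorem~\ref{thm:product-many-facets} to the symmetric Bernoulli law $\mu=\frac12(\delta_{-1}+\delta_1)$ and then pass from expectation to existence. First check the hypotheses. $\mu$ is even, non-degenerate, supported in $[-1,1]$, and $x^*=1$. Since $\mu$ has an atom of mass $1/2$ at the endpoint, \eqref{eq:intro-tail} holds with $\kappa=0$ and $s_0<2$, because $\mu([1-s,1])=1/2$ for $0<s<2$. So Theorem~\ref{thm:product-many-facets} gives some $\alpha_0(\mu)>0$. Fix once and for all $\alpha=\alpha_0/2$; then $\alpha$ and the resulting constant $c_{\mu,\alpha}$ are absolute numbers.

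With $N=\lfloor e^{\alpha n}\rfloor$ and $X_1,\dots,X_N$ independent and uniform on $\{-1,1\}^n$, Theorem~\ref{thm:product-many-facets} gives, for all sufficiently large $n$,
\[
 \E f_{n-1}(P_N)\geq (c_{\mu,\alpha}n)^{n/2}.
\]
Since $f_{n-1}(P_N)$ is a random variable, some outcome satisfies $f_{n-1}(P_N)\geq \E f_{n-1}(P_N)$. Hence there exist points $x_1,\dots,x_N\in\{-1,1\}^n$ whose convex hull $P'$ has at least $(c_{\mu,\alpha}n)^{n/2}$ facets.

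One should confirm that $P'$ is full-dimensional, so that facets are meant in the usual sense. This is automatic: a polytope of dimension less than $n$ has no $(n-1)$-dimensional faces, so $f_{n-1}(P')=0$, which contradicts the positive lower bound once $n\geq1$. Alternatively, one can appeal to \eqref{eq:full-dimensional-product}.

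Now apply the affine map $\phi(y)=(y+\1)/2$, where $\1=(1,\dots,1)$. It sends $\{-1,1\}^n$ bijectively onto $\{0,1\}^n$. Affine bijections preserve convex hulls and the whole face lattice, so $P=\phi(P')$ is a $0/1$ polytope with $f_{n-1}(P)=f_{n-1}(P')\geq (cn)^{n/2}$, where $c=c_{\mu,\alpha}$.

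There is no real obstacle. The only points needing care are verifying \eqref{eq:intro-tail} in the atomic case $\kappa=0$ (immediate, as above) and observing that, because $\mu$ and $\alpha$ are fixed, the constant is absolute rather than depending on $n$.
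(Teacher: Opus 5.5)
Your proposal is correct and follows the paper's own proof. You apply Theorem~\ref{thm:product-many-facets} to $\mu=\frac12(\delta_{-1}+\delta_1)$ (the endpoint condition holds with $\kappa=0$), take a realization with at least the expected number of facets, and move it to $\{0,1\}^n$ by the facet-preserving affine map $y\mapsto(y+\1)/2$; your extra remarks on fixing $\alpha$ so the constant is absolute, and on full-dimensionality, only make explicit what the paper leaves implicit.
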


\begin{proof}
Take $\mu=\frac12(\delta_{-1}+\delta_1)$.  The endpoint condition holds with
$\kappa=0$.  Theorem~\ref{thm:product-many-facets} gives a realization of
$P_N=\conv\{X_1,\ldots,X_N\}$ with at least $(cn)^{n/2}$ facets.  The affine
map
\[
 x\longmapsto\frac{x+1}{2}
\]
sends $\{-1,1\}^n$ onto $\{0,1\}^n$ and preserves the number of facets.
\end{proof}

\begin{corollary}[Mass at the endpoint]\label{cor:finite-discrete}
Theorem~\ref{thm:product-many-facets} applies to every non-degenerate symmetric
probability measure with mass at the endpoint.  In particular, it applies to every
symmetric bounded discrete log-concave probability measure.
\end{corollary}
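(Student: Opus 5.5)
The plan is to verify that such a measure meets the standing hypotheses of Section~2, namely non-degeneracy, evenness, compact support, and the endpoint condition \eqref{eq:intro-tail}. Theorem~\ref{thm:product-many-facets} then applies verbatim.

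First fix the setting. Let $\mu$ be a non-degenerate symmetric probability measure on $\R$ with compact support and an atom at its right endpoint $x^*=\sup\{x>0:\mu([x,\infty))>0\}$, so that $p:=\mu(\{x^*\})>0$. Symmetry gives evenness and also an atom of mass $p$ at $-x^*$. Non-degeneracy means $\mu\neq\delta_0$, so $x^*>0$, and compact support gives $\operatorname{supp}\mu\subset[-x^*,x^*]$. The key check is the endpoint condition with $\kappa=0$. For every $s>0$ we have
\[
 p\leq\mu([x^*-s,x^*])\leq1,
\]
so \eqref{eq:intro-tail} holds with $c=p$, $C=1$, $\kappa=0$, and any $s_0>0$. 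This is the case already noted in the introduction and used in Corollary~\ref{cor:01}. Where the proof of Theorem~\ref{thm:product-local-depth} needs boundary contact points on $\partial[-x^*,x^*]^n$, the endpoint atoms are exactly the situation treated there via \cite[Lemma~4.8]{BP2026}, so nothing new is needed.

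For the second sentence, let $\mu$ be a symmetric bounded discrete log-concave probability measure that is not $\delta_0$. It is supported on a finite set. Any bounded set of integers, or of lattice points after rescaling, is finite, and discrete log-concavity forces the support to be an interval of the lattice $h\mathbb Z$. Its largest support point $x^*$ is therefore an atom of positive mass, and by symmetry $-x^*$ is too. Compact support holds trivially, and the first part applies.

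The only point needing care is the interpretation of ``discrete'' and ``non-degenerate.'' One must exclude $\delta_0$, for which the theorem is false because the polytope degenerates to a point, and must check that the support is genuinely finite, so that the endpoint is an atom rather than an accumulation point. Once bounded support on a lattice is recognized as finite, everything else is a direct hypothesis check, and I expect no real obstacle.
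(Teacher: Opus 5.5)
Your proposal is correct and follows the paper's proof: an endpoint atom of mass $p=\mu(\{x^*\})>0$ gives $p\le\mu([x^*-s,x^*])\le 1$, so \eqref{eq:intro-tail} holds with $\kappa=0$. Your remark that a bounded discrete (lattice) log-concave law has finite support, so its right endpoint is an atom, spells out the ``in particular'' clause that the paper leaves implicit.
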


\begin{proof}
For all sufficiently small $s>0$, we have that \[\mu(\{x^\ast\})\leq\mu([x^\ast-s,x^\ast]) \leq 1.\]  Thus \eqref{eq:intro-tail} holds with
$\kappa=0$.
\end{proof}

\begin{remark}[Infinite atomic examples]\label{rem:atomic-examples}\rm 
The product theorem also contains purely atomic measures with no atom at the
endpoint.  For example, if $0<q,r<1$, then
\[
 \mu_{q,r}
 =\frac{1-r}{2}\sum_{k=1}^\infty r^{k-1}
 \bigl(\delta_{1-q^k}+\delta_{-(1-q^k)}\bigr)
\]
satisfies \eqref{eq:intro-tail} with
\[
 \kappa=\frac{\log r}{\log q}>0.
\]
More generally, if $x_k\uparrow x^*$, the gaps $x^*-x_k$ have bounded
multiplicative ratios, and
\[
 \sum_{j\geq k}p_j\asymp(x^*-x_k)^\kappa,
\]
then the symmetric atomic measure assigning mass $p_k/2$ to each of
$\pm x_k$ satisfies the endpoint assumption.
\end{remark}

\section{Why some structural assumption is needed}

The preceding theorem uses product structure, while the next one uses
log-concavity.  Without either type of structure, a many-facets conclusion of
the same order is false, even for symmetric isotropic probability measures
with full support.  The following elementary example will also clarify the
role of log-concavity in the next section.

\begin{proposition}\label{prop:non-logconcave-example}
Fix $\alpha>0$ and let $N=\lfloor e^{\alpha n}\rfloor$.  There exists a
symmetric isotropic probability measure $\nu_n$ on $\R^n$ with
$\operatorname{supp}\nu_n=\R^n$, which is not log-concave, such that for
independent $X_1,\ldots,X_N$ with law $\nu_n$ and
\[
 P_N=\conv\{X_1,\ldots,X_N\},
\]
one has
\[
 \E f_{n-1}(P_N)=(1+o(1))2^n.
\]
In particular,
\[
 \E f_{n-1}(P_N)=e^{O(n)}
 \ll n^{n/2}e^{-Cn}
\]
for every fixed $C>0$.
\end{proposition}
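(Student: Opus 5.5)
Take $\nu_n$ to be a mixture of a heavy cube-vertex part and a tiny full-support part:
\[
 \nu_n=(1-\varepsilon_n)\,\beta_n+\varepsilon_n\,\gamma_n ,
\]
where $\beta_n$ is the uniform measure on the vertices $\{-s,s\}^n$ of a cube and $\gamma_n$ is a centered Gaussian $\mathcal N(0,\tau^2 I_n)$. The weight $\varepsilon_n$ will be tiny, say $\varepsilon_n=e^{-2\alpha n}$. The radius $\tau$ is fixed, and $s$ is chosen so that $(1-\varepsilon_n)s^2+\varepsilon_n\tau^2=1$, which makes $\nu_n$ isotropic. The measure is symmetric with full support. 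It is not log-concave because it has atoms, or simply because its restriction to a line through two vertices is not unimodal.

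The count comes from the following dichotomy. Let $\mathcal A$ be the event that all $2^n$ vertices of the cube $s[-1,1]^n$ appear among the sample and that every Gaussian sample point lies in the interior of the cube. On $\mathcal A$, $P_N$ is exactly the cube and has $2n$ facets. Off $\mathcal A$ we use a crude bound. The number of facets of a polytope with at most $N$ vertices is bounded by McMullen's upper bound $\binom{N}{\lfloor n/2\rfloor}\le N^{n/2}=e^{\alpha n^2/2}$, so it suffices that $\Prob(\mathcal A^c)=o(e^{-\alpha n^2/2}2^n)$.

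With this choice the two-sided estimate reads
\[
 \E f_{n-1}(P_N)\le 2n+\Prob(\mathcal A^c)\,N^{n/2}.
\]
This gives $O(n)$, and in particular $e^{O(n)}$, rather than literally $(1+o(1))2^n$. To reach $(1+o(1))2^n$ exactly, I would instead use a cross-polytope-type design. Take $\beta_n$ uniform on $\{\pm s e_i\}$, which has $2^n$ facets; the cross-polytope indeed has $2^n$ facets. The event $\mathcal A$ is then ``all $2n$ vertices appear and all Gaussian points lie in the interior of $s B_1^n$''. Either way, the main theorem's conclusion fails; I will present the cross-polytope version to match the statement.

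The probability of $\mathcal A^c$ splits into two pieces.
\begin{itemize}[leftmargin=*]
 \item \emph{A vertex is missed.} This has probability at most $2n(1-\frac{1-\varepsilon_n}{2n})^N\le 2n\,e^{-N/(3n)}$. That is doubly exponentially small in $n$, so it is negligible against $e^{-\alpha n^2}$.
 \item \emph{Some Gaussian point lies outside $sB_1^n$.} This has probability at most $N\varepsilon_n$. This is the main obstacle, because $N\varepsilon_n$ is only $e^{-\alpha n}$, not small enough to beat $N^{n/2}$.
\end{itemize}

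I would fix the second piece by truncating: replace $\gamma_n$ by a full-support measure whose mass outside $\tfrac12 sB_1^n$ is at most $e^{-n^3}$. An example is density proportional to $e^{-|x|^4 n^{3}}$, rescaled; full support is kept and the tails are superexponential. Re-balancing $s$ keeps $\nu_n$ isotropic since $\varepsilon_n\to0$. Then
\[
 \Prob(\text{some point outside } sB_1^n)\le N e^{-n^3}\ll e^{-\alpha n^2},
\]
and we get
\[
 \E f_{n-1}(P_N)=2^n\,\Prob(\mathcal A)+O\bigl(\Prob(\mathcal A^c)N^{n/2}\bigr)=(1+o(1))2^n .
\]
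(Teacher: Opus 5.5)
Your final cross-polytope argument is correct and is essentially the paper's proof. It uses atoms at $\pm s e_j$, a full-support perturbation made harmless, and a crude $e^{O(n^2)}$ facet bound off the good event. Note that isotropy here forces $s^2\approx n$; the cube normalization $(1-\varepsilon_n)s^2+\varepsilon_n\tau^2=1$ you wrote does not apply. The only difference is how the perturbation is made harmless. The paper takes the mixture weight $\varepsilon_n=e^{-n^3}$, so that with overwhelming probability no Gaussian sample occurs. You instead concentrate the full-support component deep inside $sB_1^n$, which works equally well and even allows a non-tiny weight.

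Drop the preliminary cube remark, however. For $\alpha<\log 2$ one has $N<2^n$, so your event $\mathcal A$ is empty and the claimed $O(n)$ bound does not follow. For small $\alpha$ that model is, up to the negligible Gaussian part, the Bernoulli case of Theorem~\ref{thm:product-many-facets}, with at least $n^{n/2}e^{-Cn}$ expected facets. So the claim that the conclusion fails ``either way'' is false.
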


\begin{proof}
Let
\[
 \nu_n^0=\frac1{2n}\sum_{j=1}^n
 \bigl(\delta_{\sqrt n e_j}+\delta_{-\sqrt n e_j}\bigr).
\]
Then $\nu_n^0$ is symmetric and isotropic.  A sample from $\nu_n^0$ chooses
one of the $2n$ points $\pm\sqrt n e_j$ uniformly.  Hence
\[
 \Prob(\text{some point }\pm\sqrt n e_j\text{ is missing})
 \leq2n\left(1-\frac1{2n}\right)^N
 \leq2n e^{-N/(2n)}.
\]
If all $2n$ points occur among the samples, then
\[
 P_N=\conv\{\pm\sqrt n e_1,\ldots,\pm\sqrt n e_n\}
 =\sqrt n B_1^n,
\]
and therefore $f_{n-1}(P_N)=2^n$.

To obtain full support, let $\gamma_n$ be the standard Gaussian measure on
$\R^n$, put $\varepsilon_n=e^{-n^3}$, and define
\[
 \nu_n=(1-\varepsilon_n)\nu_n^0+\varepsilon_n\gamma_n.
\]
Both measures in the mixture are symmetric and isotropic, so the same is true
of $\nu_n$, while the Gaussian component gives
$\operatorname{supp}\nu_n=\R^n$.  The measure $\nu_n$ is not log-concave,
since it has atoms and at the same time full-dimensional support.

The probability that at least one of the $N$ samples comes from the Gaussian
component is at most $N\varepsilon_n$.  On the event that no Gaussian sample
occurs and every point $\pm\sqrt n e_j$ occurs, the polytope is
$\sqrt nB_1^n$.  On the complementary event we use the elementary bound
\[
 f_{n-1}(P_N)\leq\binom{N}{n}.
\]
Consequently,
\[
 \left|\E f_{n-1}(P_N)-2^n\right|
 \leq
 2\left(2n e^{-N/(2n)}+Ne^{-n^3}\right)\binom{N}{n}
 =o(2^n),
\]
because
\[
 \binom{N}{n}\leq\left(\frac{eN}{n}\right)^n=e^{O_\alpha(n^2)}.
\]
\end{proof}

\section{General log-concave measures}

We now turn to arbitrary log-concave probability measures.  No product or
symmetry assumption will be used.

\subsection{A random simplex estimate}

\begin{lemma}\label{lem:logconcave-simplex}
Let $\nu$ be a full-dimensional log-concave probability measure on $\R^d$ with
density $g$, and let $Z_0,\ldots,Z_d$ be independent random vectors with distribution $\nu$.
Then
\begin{equation}\label{eq:logconcave-simplex}
 \frac{c^d}{\sqrt{d!}\,\norm{g}_\infty} \leq\E\,\vol_d\bigl(\conv\{Z_0,\ldots,Z_d\}\bigr)
 \leq\frac{C^d}{\sqrt{d!}\,\norm{g}_\infty},
\end{equation}
where $c,C>0$ are absolute constants.
\end{lemma}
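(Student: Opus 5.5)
Both sides of \eqref{eq:logconcave-simplex} are affine invariant: if $T$ is an affine map with determinant $\det T$, then the left-hand expectation scales by $|\det T|$, and $\norm{g}_\infty$ scales by $|\det T|^{-1}$. So we may assume $\nu$ is isotropic, i.e.\ centered with identity covariance. In that case the standard log-concave estimate gives $\norm{g}_\infty^{1/d}\asymp L_\nu$, with the usual convention that $L_\nu^d=\norm{g}_\infty$ in isotropic position. The lower bound $L_\nu\geq c$ is classical. The upper bound $L_\nu\leq C$ is the recent solution of the slicing problem \cite{KlartagLehec}, the same input the paper already uses. Hence, after isotropic normalization, $\norm{g}_\infty\in[c^d,C^d]$. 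The claim then reduces to
\[
 c^d/\sqrt{d!}\leq \E\vol_d\bigl(\conv\{Z_0,\ldots,Z_d\}\bigr)\leq C^d/\sqrt{d!}
\]
for isotropic log-concave $\nu$.

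For the upper bound I would use the formula $\vol_d(\conv\{Z_0,\ldots,Z_d\})=\frac1{d!}|\det(Z_1-Z_0,\ldots,Z_d-Z_0)|$ and apply Cauchy--Schwarz, $\E|\det|\leq(\E\det^2)^{1/2}$. The second moment is computable exactly. The determinant equals the $(d+1)\times(d+1)$ determinant with rows $(1,Z_i)$, and expanding the square using independence and isotropy (so the second moments are $\E Z Z^T=I$ and $\E Z=0$) gives $\E\det^2=(d+1)!$. This yields $\E\vol_d\leq\sqrt{(d+1)!}/d!\leq C^d/\sqrt{d!}$. This step is routine and needs only isotropy.

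For the lower bound I would bound the determinant from below by a product of distances. Write $|\det(Z_1-Z_0,\ldots,Z_d-Z_0)|=\prod_{k=1}^d \operatorname{dist}(Z_k,\operatorname{aff}\{Z_0,\ldots,Z_{k-1}\})$. Condition on $Z_0,\ldots,Z_{k-1}$ and let $E$ be their affine span, which has dimension $k-1$. The distance of $Z_k$ to $E$ is the Euclidean norm of the projection of $Z_k$ onto a $(d-k+1)$-dimensional subspace $F=E_0^\perp$, shifted by a fixed vector. The projected measure is isotropic log-concave on $F$, so its density is bounded by $C^{d-k+1}$, again by the slicing bound applied in $F$. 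Consequently
\[
 \Prob\bigl(\operatorname{dist}(Z_k,E)\leq \varepsilon\sqrt{d-k+1}\bigr)\leq C^{d-k+1}\vol(\varepsilon\sqrt{d-k+1}B_2^{d-k+1})\leq (C'\varepsilon)^{d-k+1}.
\]
Choosing $\varepsilon$ a small absolute constant makes each factor at least $c\sqrt{d-k+1}$ with probability at least $1-2^{-(d-k+1)}$, uniformly in the conditioning. Multiplying the conditional probabilities, all factors are large simultaneously with probability at least $\prod_{j\geq1}(1-2^{-j})>0$. On that event $|\det|\geq c^d\sqrt{d!}$, so $\E\vol_d\geq c^d\sqrt{d!}/d!=c^d/\sqrt{d!}$.

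The main obstacle is the uniformity in the lower bound. The conditional small-ball estimate must hold for the projection of an arbitrary point onto a subspace of every dimension, with constants exponential only in the dimension of that subspace. This is exactly where bounded isotropic constants, via Klartag--Lehec, are essential. If one wanted to avoid slicing, the fallback would be the weaker bound $L\lesssim\sqrt{\log d}$, but that loses more than an exponential factor. For the factors with small $d-k+1$ the probability bounds are weak, but their product is still a positive constant, so they cause no difficulty.
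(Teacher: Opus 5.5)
Your proof is correct. The reduction to isotropic position, where $\norm{g}_\infty\in[c^d,C^d]$, and the upper bound via Cauchy--Schwarz with $\E D^2=(d+1)!$ are the paper's argument. The genuine difference is the lower bound. The paper gets it in one line: it views $D$ as a polynomial of degree $d$ in the log-concave vector $(Z_0,\ldots,Z_d)\in\R^{d(d+1)}$ and applies Bobkov's reverse H\"older inequality, $\E|D|\geq c^d(\E D^2)^{1/2}=c^d\sqrt{(d+1)!}$. You instead factor $|D|$ into successive distances to affine hulls and control each factor by a conditional small-ball bound for a marginal of $\nu$. This is more elementary and geometric, and your version, with marginal density bounds, proves more than a bound in mean: $|D|\geq c^d\sqrt{d!}$ holds with probability at least an absolute constant. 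Reverse H\"older combined with Paley--Zygmund would only give this with probability of order $c^{2d}$.

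One correction to your closing remark: slicing is not essential for the lower bound. Let $Y$ be the projection of $Z_k$ onto $F$ minus the fixed shift. Then $\E|Y|^2\geq d-k+1$, and Borell's lemma gives $\E|Y|^4\leq C(\E|Y|^2)^2$. Paley--Zygmund then yields $|Y|\geq\sqrt{(d-k+1)/2}$ with conditional probability at least an absolute $c_0>0$, and the resulting loss $c_0^d$ is admissible. So the isotropic bound $\E\vol_d\geq c^d/\sqrt{d!}$ holds without Klartag--Lehec, and converting it to the $\norm{g}_\infty$ form needs only the classical bound $L_\nu\geq c$. Slicing is indispensable for the \emph{upper} bound of the lemma: combined with the isotropic lower bound, that upper bound itself implies $L_\nu\leq C$.
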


\begin{proof}
We will first assume that $\nu$ is isotropic. Set
\[
    D=\det(Z_1-Z_0,\ldots,Z_d-Z_0).
\]
Adjoining a first row of ones gives
\[
    D
    =\det
    \begin{pmatrix}
       1&\cdots&1\\
       Z_0&\cdots&Z_d
    \end{pmatrix}.
\]
The columns $(1,Z_j)^T$ are independent and have second-moment matrix
$I_{d+1}$.  Expanding the square of the determinant, the
independence of the columns and the fact that the second-moment matrix is the
identity imply
\[
    \E D^2=(d+1)!.
\]
Since
\[
    \vol_d\bigl(\conv\{Z_0,\ldots,Z_d\}\bigr)=\frac{|D|}{d!},
\]
on the one hand Cauchy--Schwarz yields
\[
    \E\vol_d \bigl(\conv\{Z_0,\ldots,Z_d\}\bigr) 
    \leq \frac{\sqrt{\E D^2}}{d!}
    =\frac{\sqrt{d+1}}{\sqrt{d!}}\leq \frac{C^d}{\sqrt{d!}}
\] and on the other hand the fact that $D$ is a polynomial of degree $d$ of the log-concave random vector $(Z_0,\ldots,Z_d)$ in $\mathbb{R}^{d(d+1)}$ gives \[\E\vol_d \bigl(\conv\{Z_0,\ldots,Z_d\}\bigr) 
    \geq c^{d} \frac{\sqrt{\E D^2}}{d!} \geq \frac{c^d}{\sqrt{d!}},\] by reverse H\"older inequality (see \cite[Theorem~2]{Bobkov2000}).

    Now if $m$ is the barycenter of $\nu$ and $\Sigma$ its covariance matrix and $Y_i=\Sigma^{-1/2}(Z_i-m)$ for every $i=0,\ldots,d$, the random vectors $Y_i$ are independent, log-concave and isotropic. Moreover, \[\vol_d \bigl(\conv\{Y_0,\ldots,Y_d\}\bigr)=\frac{1}{\sqrt{{\rm det }\,\Sigma}}\vol_d \bigl(\conv\{Z_0,\ldots,Z_d\}\bigr).\] To conclude, the universal lower bound for the isotropic constant, as well as the proof of the slicing conjecture, yield \[c_1 \leq \|g\|_\infty^{1/d}( {\rm det}\Sigma)^{1/(2d)} \leq c_2,\] for some absolute constants $c_1,c_2>0$.
\end{proof}

\subsection{Cram\'er levels in the space of tilt parameters}

Let $\mu$ be a centered full-dimensional log-concave probability measure on
$\R^n$ with density $f$.  We write
\[
 \Lambda_\mu(\theta)=\log\int_{\R^n}e^{\ip{\theta}{x}}f(x)\,dx
\]
and denote its Legendre transform by $\Lambda_\mu^*$.  The result is affine
invariant, so we shall work in isotropic position.

We first argue under the additional assumption that $f$ is smooth, strictly
log-concave and that $\Lambda_\mu$ is finite and smooth on all of $\R^n$.  The
general case follows at the end by approximation.

For $\theta \in \mathbb{R}$ we consider the exponential tilt \[d\mu_\theta(z)=e^{\ip{\theta}{z}-\Lambda_\mu(\theta)}d\mu(z),\] which is also log-concave with \[{\rm bar}(\mu_\theta)=\nabla \Lambda_\mu(\theta), \qquad \Cov(\mu_\theta)=\nabla^2\Lambda_\mu(\theta). \]

For $t>0$ set
\[
 \Theta_t=
 \left\{\theta\in\R^n:
 \Lambda_\mu^*(\nabla\Lambda_\mu(\theta))\leq t
 \right\}
\]
Observe that $\Theta_t$ is a star-shaped body and define
\[
 M(t)=\vol_n(\Theta_t).
\]
For $u\in S^{n-1}$, let $s_t(u)>0$ be determined by
\begin{equation}\label{eq:st-definition}
 x_t(u)=\nabla\Lambda_\mu(s_t(u)u),
 \qquad
 \Lambda_\mu^*(x_t(u))=t.
\end{equation}
Thus $s_t$ is the radial function of $\Theta_t$, and if $\sigma$ denotes
the normalized spherical probability measure,
\begin{equation}\label{eq:M-radial}
 M(t)=\omega_n\int_{S^{n-1}}s_t(u)^n\,d\sigma(u),
 \qquad
 \omega_n=\vol_n(B_2^n).
\end{equation}
Define
\begin{equation}\label{eq:lambda-definition}
 \lambda_t(u)^2
 =s_t(u)^2
 \ip{\nabla^2\Lambda_\mu(s_t(u)u)u}{u}.
\end{equation}

\begin{lemma}\label{lem:M-bounds}
There is an absolute constant $C>0$ such that
\[
 M(n)\geq e^{-Cn},
 \qquad
 M(3n/2)\leq e^{Cn}.
\]
\end{lemma}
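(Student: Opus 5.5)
The plan is to compare $\Theta_t$ with sublevel sets of $\Lambda_\mu$, and then to compare those with polars of $L_q$-centroid bodies, whose volumes are known up to $e^{O(n)}$ in isotropic position.

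\textbf{Reduction to $\Lambda_\mu$ along rays.} Write $g(\theta)=\Lambda_\mu^*(\nabla\Lambda_\mu(\theta))=\ip{\theta}{\nabla\Lambda_\mu(\theta)}-\Lambda_\mu(\theta)$. Along a ray $\theta=su$, put $h_u(r)=\ip{\nabla^2\Lambda_\mu(ru)u}{u}$. Since $\mu$ is centered, $\Lambda_\mu(0)=0$ and $\nabla\Lambda_\mu(0)=0$, so
\[
 g(su)=\int_0^s r\,h_u(r)\,dr,\qquad \Lambda_\mu(su)=\int_0^s(s-r)\,h_u(r)\,dr .
\]
For the lower bound I use convexity: $\Lambda_\mu(2\theta)\geq\Lambda_\mu(\theta)+\ip{\theta}{\nabla\Lambda_\mu(\theta)}$. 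Together with $\Lambda_\mu\geq0$ this gives $g(\theta)\leq\Lambda_\mu(2\theta)$, hence
\[
 \tfrac12\{\Lambda_\mu\leq n\}\subset\Theta_n .
\]
For the upper bound I need a reverse inclusion of the form $\Theta_{3n/2}\subset C\{\Lambda_\mu\leq C'n\}$, possibly after discarding a bounded region near the origin. From the ray formulas, $g(2su)\geq\int_{s/2}^{2s}r\,h_u\,dr$ dominates the part $\int_{s/2}^{s}(s-r)h_u\,dr$ of $\Lambda_\mu(su)$ up to a constant. The remaining piece is $\int_0^{s/2}(s-r)h_u\,dr\leq s\int_0^{s/2}h_u$. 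I would try to absorb it using $\int_{s/2}^{2s}r\,h_u\,dr\geq \tfrac s2\bigl(\partial_r\Lambda_\mu(2su)-\partial_r\Lambda_\mu(\tfrac s2u)\bigr)$ together with monotonicity of $r\mapsto\partial_r\Lambda_\mu(ru)$. Concretely, $s\int_0^{s/2}h_u=s\,\partial_r\Lambda_\mu(\tfrac s2 u)$, and this should be comparable to $g$ at a slightly larger radius via $g(2su)\geq\int_{s}^{2s}r h_u\geq s(\partial_r\Lambda_\mu(2su)-\partial_r\Lambda_\mu(su))$. This comparison, which must hold without smoothness constants depending on $\mu$, is the step I expect to be the main obstacle.

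\textbf{Centroid bodies.} Let $Z_q(\mu)$ have support function $h_{Z_q}(\theta)=(\E|\ip{\theta}{X}|^q)^{1/q}$. By the log-concave moment comparison $h_{Z_p}\leq C\frac pq h_{Z_q}$ for $p\geq q$, the series $\E e^{\ip\theta X}\leq\sum_p h_{Z_p}(\theta)^p/p!$ shows that $\Lambda_\mu(\theta)\leq q$ whenever $\theta\in c\,q\,Z_q(\mu)^\circ$ (Latała--Wojtaszczyk). Conversely, Paley--Zygmund for $\ip\theta X$ under log-concavity gives $\{\Lambda_\mu\leq q\}\subset C q\,Z_q(\mu)^\circ$. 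Taking $q\asymp n$, both $M(n)$ and $M(3n/2)$ are therefore comparable, up to $C^n$, to $n^n\vol_n(Z_n(\mu)^\circ)$.

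\textbf{Volumes.} By Blaschke--Santaló and Bourgain--Milman, $\vol_n(Z_n^\circ)\asymp_{C^n}\omega_n^2/\vol_n(Z_n)$. In isotropic position $\vol_n(Z_n(\mu))^{1/n}\asymp\norm f_\infty^{-1/n}\asymp L_\mu^{-1}$ (Paouris), and $L_\mu\asymp1$ by the solution of the slicing problem \cite{KlartagLehec}. Hence $\vol_n(Z_n)=e^{O(n)}$. Since $n^n\omega_n^2=e^{O(n)}$, this yields $M(n)\geq e^{-Cn}$ and $M(3n/2)\leq e^{Cn}$.
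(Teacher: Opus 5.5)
Your lower bound is correct and is essentially the paper's argument. Convexity along rays gives $\tfrac12\{\Lambda_\mu\le n\}\subset\Theta_n$; the paper uses the smaller two-sided set $\{\Lambda_\mu(\theta)\le n,\ \Lambda_\mu(-\theta)\le n\}$. Then $cnZ_n(\mu)^\circ\subset\{\Lambda_\mu\le n\}$ together with Bourgain--Milman gives $e^{-Cn}$.

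The upper bound has a genuine gap. The reverse inclusion $\Theta_{3n/2}\subset C\{\Lambda_\mu\le C'n\}$ is not just hard to prove but false, and discarding a region near the origin does not help. Take $\mu$ uniform on the isotropic cube $[-\sqrt3,\sqrt3]^n$, so $\Lambda_\mu(\theta)=\sum_i\psi(\theta_i)$ with $\psi(s)=\log\frac{\sinh(\sqrt3 s)}{\sqrt3 s}$. Along $\theta=se_1$ one finds $\Lambda_\mu^*(\nabla\Lambda_\mu(se_1))=s\psi'(s)-\psi(s)=\log(2\sqrt3 s)-1+o(1)$, whereas $\Lambda_\mu(se_1/C)=\sqrt3 s/C-O(\log s)$. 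Hence $\pm Se_i\in\Theta_{3n/2}$ for some $S\asymp e^{3n/2}$, far outside $C\{\Lambda_\mu\le C'n\}$.

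Worse, $\conv\Theta_{3n/2}\supseteq SB_1^n$, whose volume is $(2S)^n/n!=e^{3n^2/2-O(n\log n)}$. So the star-shaped, non-convex set $\Theta_{3n/2}$ lies in no convex set of volume $e^{O(n)}$. No comparison with sublevel sets of $\Lambda_\mu$ or with centroid bodies can therefore give $M(3n/2)\le e^{Cn}$, even though that bound is true here.

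This is a general phenomenon. In a direction normal to a flat piece of $\partial\operatorname{supp}\mu$, the tilted barycenters approach that face at rate $1/s$. So $\Lambda_\mu^*$ at the barycenter grows only like $\log s$, while $\Lambda_\mu(su)$ grows linearly. This is exactly where your absorption step breaks: with $\phi(s)=\Lambda_\mu(se_1)$, the term $s\phi'(s/2)\approx\sqrt3 s$ is exponentially larger than $g(2se_1)\approx\log s$.

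The missing idea is to measure $\Theta_{3n/2}$ after pushing it forward by $\theta\mapsto x=\nabla\Lambda_\mu(\theta)$, as the paper does. This gives $M(3n/2)=\int_{B_{3n/2}}\det\nabla^2\Lambda_\mu^*(x)\,dx$ with $B_t=\{\Lambda_\mu^*\le t\}$. The spikes in $\theta$-space become thin layers near the boundary of the support in $x$-space, and the Jacobian is still $e^{O(n)}$ there:
\begin{itemize}
\item Let $g_x$ be the density of the tilt with barycenter $x$. Fradelizi's inequality gives $\norm{g_x}_\infty\le e^n g_x(x)=e^{n+\Lambda_\mu^*(x)}f(x)\le e^{Cn}$, using $\norm{f}_\infty\le C^n$ from the slicing theorem.
\item The lower bound $\norm{g_x}_\infty\sqrt{\det\Cov(g_x)}\ge c^n$ for the isotropic constant, with $\Cov(g_x)=(\nabla^2\Lambda_\mu^*(x))^{-1}$, then yields $\det\nabla^2\Lambda_\mu^*(x)\le e^{Cn}$ on $B_{3n/2}$.
\item Together with $\vol_n(B_{3n/2})\le e^{Cn}$, this gives the upper bound.
\end{itemize}

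There is a secondary error on the same side. The inclusion $\{\Lambda_\mu\le q\}\subset CqZ_q(\mu)^\circ$ fails for non-symmetric $\mu$, because $\Lambda_\mu(\theta)\le q$ controls only the positive part of $\ip{\theta}{X}$. For coordinates distributed as $E-1$, with $E$ standard exponential, $-\tfrac q2 e_1\in\{\Lambda_\mu\le q\}$, while $CqZ_q(\mu)^\circ$ contains $-\rho e_1$ only for $\rho\le C'$. This is why the paper uses a two-sided set. Your lower bound needs only the correct inclusion $cqZ_q(\mu)^\circ\subset\{\Lambda_\mu\le q\}$ and is unaffected.
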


\begin{proof}
For the lower bound, put
\[
 \mathcal L_n=
 \{\theta:\Lambda_\mu(\theta)\leq n,\ \Lambda_\mu(-\theta)\leq n\}.
\]
The standard comparison between logarithmic Laplace sublevel sets and
$L_n$-centroid bodies \cite[Lemma~2.3]{KlartagMilman}, together with the volume estimates
for centroid bodies and the reverse Santal\'o inequality
\cite{BourgainMilman}, gives
\begin{equation}\label{eq:Ln-volume}
 \vol_n(\mathcal L_n)\geq e^{-Cn}.
\end{equation}
We claim that $\frac12\mathcal L_n\subset\Theta_n$.  Indeed, if
$2\theta\in\mathcal L_n$ and $g(s)=\Lambda_\mu(s\theta)$, then $g\geq0$ because
$\mu$ is centered, and convexity gives
\[
 g'(1)\leq g(2)-g(1).
\]
Therefore
\[
 \Lambda_\mu^*(\nabla\Lambda_\mu(\theta))
 =g'(1)-g(1)
 \leq g(2)-2g(1)
 \leq n.
\]
This and \eqref{eq:Ln-volume} imply $M(n)\geq e^{-Cn}$.

For the upper bound, set
\[
 B_t=\{x:\Lambda_\mu^*(x)\leq t\}.
\]
The comparison of Cram\'er sublevel sets with one-sided centroid bodies and
the corresponding volume estimates imply
\begin{equation}\label{eq:B-volume}
 \vol_n(B_{3n/2})\leq e^{Cn};
\end{equation}
see, for example, \cite[Proposition~2.7]{GT2026}.

For $x\in B_{3n/2}$ let $\theta_x=\nabla\Lambda_\mu^*(x)$ and let $g_x$ be the
density of the exponential tilt $\mu_{\theta_x}$. By duality, its barycenter is $x$ and its covariance is
\[
 \Cov(g_x)=\nabla^2\Lambda_\mu(\theta_x)
 =\bigl(\nabla^2\Lambda_\mu^*(x)\bigr)^{-1}.
\]
By Fradelizi's inequality \cite[Theorem~4]{Fradelizi},
\[
 \norm{g_x}_\infty
 \leq e^n g_x(x)
 =e^{n+\Lambda_\mu^*(x)}f(x)
 \leq e^{5n/2}\norm{f}_\infty.
\]
The functional form of the slicing theorem, obtained from Ball's reduction
\cite{Ball} and the resolution of the slicing problem by Klartag and Lehec
\cite{KlartagLehec}, gives
\[
 \norm{f}_\infty^{1/n}\leq C
\]
for isotropic $\mu$.  Hence $\norm{g_x}_\infty\leq e^{Cn}$.

If $H_x=\nabla^2\Lambda_\mu^*(x)$, the lower bound of the isotropic constant used in the proof of
Lemma~\ref{lem:logconcave-simplex} gives
\[
 \norm{g_x}_\infty\sqrt{\det\Cov(g_x)}\geq c^n,
\]
that is,
\[
 \sqrt{\det H_x}\leq e^{Cn},
 \qquad x\in B_{3n/2}.
\]
The change of variables $x=\nabla\Lambda_\mu(\theta)$ gives
\[
 M(3n/2)
 =\int_{B_{3n/2}}\det\nabla^2\Lambda_\mu^*(x)\,dx
 \leq e^{Cn}\vol_n(B_{3n/2}),
\]
and \eqref{eq:B-volume} completes the proof.
\end{proof}

\begin{lemma}\label{lem:lambda-upper}
There is an absolute constant $C>0$ such that
\[
 \lambda_t(u)\leq C(1+t)
\]
for all $t>0$ and $u\in S^{n-1}$.
\end{lemma}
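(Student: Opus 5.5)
The plan is to reduce the statement to a one-dimensional inequality for the marginal of $\mu$ in direction $u$, and then to prove that inequality by undoing the tilt. Fix $u\in S^{n-1}$ and set $g(r)=\Lambda_\mu(ru)$. Then $g$ is the logarithmic Laplace transform of the law of $X=\ip{Z}{u}$, where $Z\sim\mu$. Since $\mu$ is centered and isotropic, $X$ is a centered log-concave random variable with variance $1$. With $s=s_t(u)$ we have $\ip{\nabla^2\Lambda_\mu(su)u}{u}=g''(s)$. This is the variance $\sigma_s^2$ of $X$ under the tilted law $\mu_{su}$, i.e.\ the law of $X$ tilted by $e^{sX-g(s)}$, which is again log-concave. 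Moreover,
\[
 t=\Lambda_\mu^*(\nabla\Lambda_\mu(su))=\ip{su}{\nabla\Lambda_\mu(su)}-\Lambda_\mu(su)=sg'(s)-g(s).
\]
So it suffices to prove $s\sigma_s\leq C(1+J)$, where $J:=sg'(s)-g(s)$ and $s>0$.

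The key identity comes from writing $\mu$ back in terms of its tilt. The original law of $X$ equals $e^{-sX+g(s)}$ times the tilted law. Integrating and writing $m=g'(s)$ for the tilted mean gives
\[
 1=\E_s e^{-sX+g(s)}=e^{-J}\,\E_s e^{-s(X-m)},
 \qquad\text{that is,}\qquad
 \E_s e^{-s(X-m)}=e^{J}.
\]
Now $Y=X-m$ is centered and log-concave under the tilt, with standard deviation $\sigma_s$. I will use the standard one-dimensional fact that there is an absolute $c_0>0$ with $\Prob_s(Y\leq -c_0\sigma_s)\geq c_0$. It follows from Grünbaum's inequality $\Prob_s(Y\leq0)\geq 1/e$, together with the bound $C/\sigma_s$ on the density of a one-dimensional log-concave law with standard deviation $\sigma_s$. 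That density bound gives $\Prob_s(-c\sigma_s<Y\leq0)\leq Cc$, which is less than $1/(2e)$ for $c$ small. Restricting the expectation to this event then gives
\[
 e^{J}\geq c_0\,e^{c_0 s\sigma_s},
 \qquad\text{hence}\qquad
 s\sigma_s\leq c_0^{-1}\bigl(J+\log(1/c_0)\bigr)\leq C(1+t).
\]
Since $\lambda_t(u)=s_t(u)\sigma_{s_t(u)}$, this is exactly the claim, uniformly in $t$ and $u$.

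The only delicate step is the anti-concentration fact for the tilted marginal. It has to hold with absolute constants, regardless of how large $s$ is or how degenerate the tilt becomes. That is why I rely only on Grünbaum's inequality and the density bound for one-dimensional log-concave laws, both of which are scale-free. Under the standing smoothness assumptions, all the expectations above are finite and differentiation under the integral sign is justified. No high-dimensional input such as slicing is needed.
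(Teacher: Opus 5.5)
Your proof is correct and is essentially the paper's argument: both rest on the identity $\E_{su}e^{-Y}=e^{t}$ for the centered tilted statistic $Y=s(\ip{X}{u}-h)$, whose variance is $\lambda_t(u)^2$. Both then use a scale-free one-dimensional log-concave estimate guaranteeing mass of order one below $-c\,\lambda_t(u)$. The only cosmetic difference is that you obtain this left-tail mass from Gr\"unbaum's inequality together with the density upper bound, whereas the paper quotes a lower bound on the density of the normalized variable on $[-c_0,-c_0/2]$.
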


\begin{proof}
Write $s=s_t(u)$, $x=x_t(u)$, $h=\ip{x}{u}$, and consider the exponential
tilt with parameter $su$.  If $X$ is a random vector distributed according to $\mu_{su}$, under this tilt, the random variable
\[
 Y=s(\ip{X}{u}-h)
\]
is centered, log-concave, and has variance $\lambda_t(u)^2$.  Moreover,
\[
 \E_{su}e^{-Y}=e^t.
\]
Write $Y=\lambda_t(u)Z$, where $Z$ is centered, log-concave and has variance
one.  Standard one-dimensional log-concave estimates give absolute constants
$c_0,c_1>0$ such that the density of $Z$ is at least $c_1$ on
$[-c_0,-c_0/2]$.  Hence
\[
 e^t\geq c_1\int_{-c_0}^{-c_0/2}e^{-\lambda_t(u)z}\,dz
 \geq c_1 e^{c_0\lambda_t(u)}
.\]
The assertion
follows.
\end{proof}

Differentiating
\[
 \Lambda_\mu^*(\nabla\Lambda_\mu(su))
 =s\ip{\nabla\Lambda_\mu(su)}{u}-\Lambda_\mu(su)
\]
with respect to $s$, we get
\[
 \frac{dt}{ds}
 =s\ip{\nabla^2\Lambda_\mu(su)u}{u}
 =\frac{\lambda_t(u)^2}{s}.
\]
Thus \eqref{eq:M-radial} implies
\begin{equation}\label{eq:M-prime-radial}
 M'(t)
 =n\omega_n\int_{S^{n-1}}
 \frac{s_t(u)^n}{\lambda_t(u)^2}\,d\sigma(u).
\end{equation}

\begin{proposition}\label{prop:direction-set}
There are an absolute $C>0$, a level
\[
 t\in[n,3n/2],
\]
a number
\[
 c\sqrt n\leq a\leq Cn,
\]
and a measurable set $S\subset S^{n-1}$ such that
\[
 a\leq\lambda_t(u)<2a,
 \qquad u\in S,
\]
and
\begin{equation}\label{eq:st-mass}
 \int_Ss_t(u)^n\,d\sigma(u)
 \geq\frac{e^{-Cn}}{\omega_n}.
\end{equation}
\end{proposition}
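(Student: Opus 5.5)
Integrate the radial derivative formula \eqref{eq:M-prime-radial} over $[n,3n/2]$ and use Lemma~\ref{lem:M-bounds}. Since $M$ is increasing in $t$ (the sets $\Theta_t$ increase, because $t$ is increasing along rays by the computation $dt/ds=\lambda_t^2/s>0$), we have
\[
 \int_n^{3n/2}M'(t)\,dt\leq M(3n/2)\leq e^{Cn},
\]
while $M(t)\geq M(n)\geq e^{-Cn}$ for all $t\in[n,3n/2]$. The idea is to find a level $t$ at which $M'(t)$ is not much larger than its average, and then read off from \eqref{eq:M-prime-radial} that most of the mass $\int s_t^n\,d\sigma$ cannot sit on directions with very small $\lambda_t$.

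\emph{Step 1: choice of $t$.} By Chebyshev/averaging on $[n,3n/2]$, the set of $t$ with $M'(t)\leq 4e^{Cn}/n$ has measure at least $3n/8$, so we may pick such a $t$; note $n\omega_n\int s_t^n\lambda_t^{-2}d\sigma=M'(t)\leq e^{C'n}$ and $\omega_n\int s_t^n d\sigma=M(t)\geq e^{-Cn}$.

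\emph{Step 2: discarding small $\lambda$.} Let $S_0=\{u:\lambda_t(u)<c_0\sqrt n\}$. Then
\[
 \omega_n\int_{S_0}s_t^n\,d\sigma\leq c_0^2 n\,\omega_n\int_{S_0}\frac{s_t^n}{\lambda_t^2}\,d\sigma\leq c_0^2 M'(t)\leq c_0^2e^{C'n}.
\]
This is the main obstacle: the crude exponential bounds do not let an absolute $c_0$ absorb the ratio $e^{(C+C')n}$. To fix this I would instead choose $c_0=e^{-C''n}$? That destroys $a\geq c\sqrt n$. So the better route is a more refined choice of $t$: consider $\log M(t)$, whose derivative is $M'/M$ and whose total increase over $[n,3n/2]$ is at most $2Cn$; hence there is $t$ with $M'(t)/M(t)\leq 4C$. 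With this $t$,
\[
 \omega_n\int_{S_0}s_t^n\,d\sigma\leq \frac{c_0^2}{1}\,\frac{M'(t)}{1}\leq 4Cc_0^2M(t)\leq \tfrac12 M(t)
\]
for $c_0$ a small absolute constant. Thus $\omega_n\int_{S_0^c}s_t^n\,d\sigma\geq \tfrac12M(t)\geq\tfrac12e^{-Cn}$.

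\emph{Step 3: dyadic pigeonholing.} On $S_0^c$ we have $c_0\sqrt n\leq\lambda_t(u)\leq C(1+t)\leq C_1n$ by Lemma~\ref{lem:lambda-upper}. Split this range into $O(\log n)$ dyadic intervals $[a,2a)$ with $a=2^kc_0\sqrt n$. Some interval carries at least a $1/O(\log n)$ fraction of $\int_{S_0^c}s_t^n\,d\sigma$; since $\log n=e^{o(n)}$, setting $S=\{u\in S_0^c:a\leq\lambda_t(u)<2a\}$ gives \eqref{eq:st-mass} after enlarging $C$. Measurability of $S$ follows from continuity of $s_t$ and $\lambda_t$ in $u$ under the smoothness assumptions in force.
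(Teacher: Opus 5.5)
Your argument, after the abandoned first attempt, is correct and is essentially the paper's proof: choose $t\in[n,3n/2]$ with $M'(t)\le C_0M(t)$ by integrating $(\log M)'$ and applying Lemma~\ref{lem:M-bounds}; use \eqref{eq:M-prime-radial} to show that directions with $\lambda_t^2\lesssim n$ carry at most half of $M(t)$; then pigeonhole over the $O(\log n)$ dyadic pieces of $[c\sqrt n,Cn]$ supplied by Lemma~\ref{lem:lambda-upper}. As you noticed yourself, averaging $M'$ alone in Step~1 is insufficient, and that part can simply be deleted.
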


\begin{proof}
Lemma~\ref{lem:M-bounds} gives
\[
 \int_n^{3n/2}\frac{M'(r)}{M(r)}\,dr
 =\log\frac{M(3n/2)}{M(n)}
 \leq Cn.
\]
Hence there is $t\in[n,3n/2]$ with
\[
 M'(t)\leq C_0M(t)
\]
for an absolute $C_0$.  Let
\[
 R=\left\{u:\lambda_t(u)^2\geq\frac{n}{2C_0}\right\}.
\]
By \eqref{eq:M-prime-radial},
\[
 \int_{S^{n-1}\setminus R}s_t(u)^n\,d\sigma(u)
 \leq\frac{M(t)}{2\omega_n}.
\]
Together with \eqref{eq:M-radial},
\[
 \int_Rs_t(u)^n\,d\sigma(u)
 \geq\frac{M(t)}{2\omega_n}
 \geq\frac{e^{-Cn}}{\omega_n},
\]
since $M(t)$ is increasing. On $R$, Lemma~\ref{lem:lambda-upper} gives
\[
 c\sqrt n\leq\lambda_t(u)\leq Cn.
\]
A dyadic decomposition of this interval has $O(\log n)$ pieces.  One of them,
$[a,2a)$, satisfies \eqref{eq:st-mass}, after changing the absolute constant
in the exponential.
\end{proof}

\subsection{Lower bound for the expected facets}

For $u\in S$ from Proposition~\ref{prop:direction-set}, write
\[
 s=s_t(u),
 \qquad
 x=x_t(u),
 \qquad
 h=\ip{x}{u},
 \qquad
 \lambda=\lambda_t(u).
\]
If $X$ is distributed according to $\mu_{su}$, let $g$ be the density of
\[
 Y=s(\ip{X}{u}-h).
\]
Then $Y$ is centered and log-concave with variance $\lambda^2$.  Since
$\lambda\geq c\sqrt n$, the standard one-dimensional estimates give absolute
constants $c_0,c_1,C_1>0$ such that
\begin{equation}\label{eq:g-section-bounds}
 g(y)\geq\frac{c_1}{\lambda}
 \quad(0\leq y\leq c_0),
 \qquad
 \norm{g}_\infty\leq\frac{C_1}{\lambda}.
\end{equation}
For $u \in S^{n-1}$ and $r \in \mathbb{R}$ we set \[H(u,r)=\{z: \ip{z}{u}=r\}, \qquad H^+(u,r)=\{z: \ip{z}{u} \geq r\}.\] With this notation, for $0\leq y\leq c_0$, let also
\[
 H_y=H\left(u, h+\frac ys\right),
 \qquad
 H_y^+=H^+\left(u, h+\frac ys\right).
\]
If
\[
 p(u,r)=\int_{H(u,r)}f(z)\,d\cH^{n-1}(z),
\]
then the definition of the tilt gives
\begin{equation}\label{eq:section-mass-logconcave}
 g(y)=\frac1s e^{t+y}
 p\left(u,h+\frac ys\right).
\end{equation}
Consequently,
\begin{equation}\label{eq:section-mass-lower}
 p\left(u,h+\frac ys\right)
 \geq c\frac{s e^{-t}}\lambda,
 \qquad 0\leq y\leq c_0.
\end{equation}
Also,
\begin{equation}\label{eq:halfspace-upper-logconcave}
 \begin{aligned}
 \mu(H_y^+)
 &=e^{-t}\E_{su}\bigl[e^{-Y}\1_{\{Y\geq y\}}\bigr]\\
 &\leq C\frac{e^{-t}}\lambda
 \leq C\frac{e^{-t}}a.
 \end{aligned}
\end{equation}

\begin{theorem}\label{thm:general-logconcave}
There exists an absolute constant $C>0$ such that the following holds.  For
every sufficiently large $n$ and every full-dimensional log-concave
probability measure $\mu$ on $\R^n$, there exist $T\in[n,2n]$ and
\[
 N=\left\lceil e^Tn^{3/2}\right\rceil
\]
such that, if $X_1,\ldots,X_N$ are independent with distribution $\mu$ and
\[
 P_N=\conv\{X_1,\ldots,X_N\},
\]
then
\[
 \E f_{n-1}(P_N)
 \geq n^{n/2}e^{-Cn}
 \geq(cn)^{n/2}.
\]
\end{theorem}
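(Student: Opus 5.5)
The plan is to count facets through the Blaschke--Petkantschin formula. The paper's roadmap says this is how the log-concave lower bound goes.

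\textbf{Reduction and choice of parameters.} By affine invariance we pass to isotropic position, with the barycenter at the origin. We first treat the smooth, strictly log-concave case with $\Lambda_\mu$ finite everywhere, and recover the general case at the end by approximation in total variation (facet counts of $P_N$ are a.s.\ lower semicontinuous in the sample). We take $t\in[n,3n/2]$, $a$ and $S$ from Proposition~\ref{prop:direction-set}, and set $T=t+\log(a/\sqrt n)\cdot$(bounded correction). Concretely, we pick $N=\lceil e^T n^{3/2}\rceil$ with $T\in[n,2n]$ chosen so that $N e^{-t}/a\asymp n^{c}$ only polynomially; since $c\sqrt n\le a\le Cn$ and $e^T n^{3/2}$ must match $e^t a\cdot n$ up to $e^{O(n)}$, some $T\in[t,t+\tfrac12 n]\subset[n,2n]$ works. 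Only $e^{O(n)}$ precision is needed for $N e^{-t}$, and the mismatch between $N\cdot C e^{-t}/a$ and a constant costs at most $e^{O(n)}$ in the probability factor below. For this reason it is more convenient to keep $y\in[0,c_0]$ fixed and let $N(1-\mu(H_y^+))^{N-n}$ be bounded below by $e^{-Cn}$ via \eqref{eq:halfspace-upper-logconcave}, which requires $N e^{-t}/a\le Cn$.

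\textbf{Facet formula.} Almost surely $P_N$ is simplicial, and
\[
\E f_{n-1}(P_N)=\binom Nn\int \Prob\bigl(\text{all other }N-n\text{ points lie on one side of }\operatorname{aff}\{x_1,\dots,x_n\}\bigr)\prod f(x_i)\,dx_i .
\]
We restrict to hyperplanes $H(u,r)$ with $u\in S$ and $r=h+y/s$, $y\in[0,c_0]$, and keep only the side $H^-$. Blaschke--Petkantschin rewrites $\prod dx_i$ as $(n-1)!\,\vol_{n-1}(\conv)\,\prod d\cH^{n-1}(x_i)\,dr\,du$ on the hyperplane, with $du$ the unnormalized spherical measure. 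The inner integral is
\[
p(u,r)^n\,\E\vol_{n-1}\bigl(\conv\{Z_1,\dots,Z_n\}\bigr),
\]
where the $Z_i$ are drawn from the normalized section density $f|_{H}/p(u,r)$, which is log-concave on the $(n-1)$-dimensional hyperplane. Lemma~\ref{lem:logconcave-simplex} with $d=n-1$ bounds the expected volume below by $c^n/(\sqrt{(n-1)!}\,\norm{f|_H/p}_\infty)$. Moreover $\norm{f|_H}_\infty\le\norm f_\infty\le C^n$ by slicing. Hence the section contribution is at least $c^n p^{n+1}/\sqrt{(n-1)!}$.

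\textbf{Assembling.} Using \eqref{eq:section-mass-lower}, $p\ge c\,s e^{-t}/\lambda\ge c\,s e^{-t}/(2a)$, and the $r$-interval has length $c_0/s$. The probability that the remaining points avoid $H_y^+$ is at least $(1-Ce^{-t}/a)^{N}\ge e^{-Cn}$ provided $N\le C' n a e^{t}$. We therefore define $T$ by $e^T n^{3/2}=n a e^t$, so that $T=t+\log(a/\sqrt n)\in[t-O(1),t+\log n]$, which lies in $[n,2n]$ after minor adjustment. The bound becomes
\[
\E f_{n-1}\ge e^{-Cn}\frac{N^n}{n!}\frac{(n-1)!}{\sqrt{(n-1)!}}\,|S^{n-1}|\int_S \frac{1}{s}\Bigl(\frac{s e^{-t}}{a}\Bigr)^{n+1}d u .
\]
Here $N^n e^{-tn}a^{-n}\approx n^{n}$ and $e^{-t}/a\cdot s^{n}$ integrates via \eqref{eq:st-mass} to at least $e^{-Cn}/\omega_n$. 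We also have $|S^{n-1}|/\omega_n=n$, and $\sqrt{(n-1)!}/n!\approx n^{-n/2}e^{O(n)}$. This leaves $n^{n}\cdot n^{-n/2}\cdot N e^{-t}/a\cdot e^{-Cn}\ge n^{n/2}e^{-Cn}$, since $Ne^{-t}/a= n\ge1$.

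\textbf{Main obstacle.} The hard part is the exponent bookkeeping in the last step. The powers of $s$, of $a$, and of $e^{-t}$ must cancel exactly, leaving only $s^n$, which is what \eqref{eq:st-mass} controls. The choice of $N$ must simultaneously keep the avoidance probability $\ge e^{-Cn}$ and make $(Ne^{-t}/a)^n$ contribute the full $n^{n}$. I would first verify this with $a$ treated as a free parameter in $[c\sqrt n,Cn]$, which is exactly why $T$ is allowed to vary in $[n,2n]$. I would then handle the approximation step by monotone limits of smooth strictly log-concave densities.
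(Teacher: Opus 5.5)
Your proposal is correct and is essentially the paper's own argument: isotropic smooth reduction, the level $t$, scale $a$ and direction set $S$ from Proposition~\ref{prop:direction-set}, $N$ of order $nae^t$, the Blaschke--Petkantschin formula restricted to $u\in S$ and $r=h+y/s$ with $y\in[0,c_0]$, the random-simplex lemma combined with $\norm{f}_\infty\le C^n$ and \eqref{eq:section-mass-lower}, and finally \eqref{eq:st-mass}. The remaining issues are minor slips rather than gaps:
\begin{itemize}
\item After $(Ne^{-t}/a)^n\ge n^n$, the leftover factor is $e^{-t}/a\ge e^{-3n/2}/(Cn)$, not $Ne^{-t}/a$; it is absorbed into $e^{-Cn}$.
\item The ``minor adjustment'' that guarantees $T\ge n$ is the paper's large constant $A$ in $N=\lceil Ae^tna\rceil$.
\item In the approximation step, the bound is transferred by total-variation convergence together with $f_{n-1}\le\binom Nn$. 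Lower semicontinuity points the wrong way and is not what does the work.
\item Also in the approximation step, you must first pass to approximants that share a common $N$. This is possible because only finitely many values of $N$ occur.
\end{itemize}
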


\begin{proof}
By affine invariance we work in isotropic position and use the smooth setting
described above.  Choose $t,a,S$ as in
Proposition~\ref{prop:direction-set}.  Fix a sufficiently large absolute
constant $A>0$ and set
\begin{equation}\label{eq:N-logconcave}
 N=\left\lceil Ae^tna\right\rceil.
\end{equation}
For $u\in S$ and $0\leq y\leq c_0$,
\eqref{eq:halfspace-upper-logconcave} gives
\[
 N\mu(H_y^+)\leq C n.
\]
Since $t\geq n$, $\mu(H_y^+)=o(1)$, and therefore
\begin{equation}\label{eq:empty-side-logconcave}
 \bigl(1-\mu(H_y^+)\bigr)^{N-n}
 \geq e^{-C n}.
\end{equation}

For $u\in S^{n-1}$ and $r>0$ define
\[
 J(u,r)=
 \int_{H(u,r)^n}
 \cH^{n-1}\bigl(\conv\{z_1,\ldots,z_n\}\bigr)
 \prod_{i=1}^nf(z_i)\,d\cH^{n-1}(z_i).
\] Observe now that \[J(u,r)=p(u,r)^n\,\E\,\cH^{n-1}\bigl(\conv\{Z_1,\ldots,Z_n\}\bigr),\] where $Z_1,\ldots,Z_n$ are i.i.d $(n-1)$-dimensional log-concave random vectors, with density \[\frac{f(z)}{p(u,r)} \mathbf{1}_{H(u,r)}\]

Combining Lemma~\ref{lem:logconcave-simplex} with \eqref{eq:section-mass-lower},
\begin{equation}\label{eq:J-lower-logconcave}
 J\left(u,h+\frac ys\right)
 \geq
 \frac{e^{-Cn}e^{-tn}}{\sqrt{(n-1)!}}
 \frac{s^{n+1}}{\lambda^{n+1}},
 \qquad 0\leq y\leq c_0.
\end{equation}

The affine Blaschke--Petkantschin formula \cite{SchneiderWeil} gives the
following lower bound for the expected number of facets:
\[
 \E f_{n-1}(P_N)
 \geq\binom Nn I,
\]
where
\[
 I=n!\omega_n
 \int_{S^{n-1}}\int_0^\infty
 \bigl(1-\mu(H^+(u,r))\bigr)^{N-n}
 J(u,r)\,dr\,d\sigma(u).
\]
We restrict the integral to $u\in S$ and
$r=h+y/s$, $0\leq y\leq c_0$.  Since $dr=dy/s$, equations
\eqref{eq:empty-side-logconcave} and \eqref{eq:J-lower-logconcave} give
\[
 \begin{aligned}
 I
 &\geq
 \frac{e^{-Cn}n!\omega_ne^{-tn}}{\sqrt{(n-1)!}}
 \int_S\frac{s_t(u)^n}{\lambda_t(u)^{n+1}}\,d\sigma(u)\\
 &\geq
 \frac{e^{-Cn}n!\omega_ne^{-tn}}
 {\sqrt{(n-1)!}\,a^{n+1}}
 \int_Ss_t(u)^n\,d\sigma(u).
 \end{aligned}
\]
By \eqref{eq:st-mass},
\begin{equation}\label{eq:I-final-logconcave}
 I\geq
 \frac{e^{-Cn}n!e^{-tn}}
 {\sqrt{(n-1)!}\,a^{n+1}}.
\end{equation}
Since $N\gg n$, we have
$\binom Nn n!\geq(N/2)^n$.  Using
\eqref{eq:N-logconcave} and \eqref{eq:I-final-logconcave},
\[
 \E f_{n-1}(P_N)
 \geq
 e^{-Cn}\frac{n^n}{a\sqrt{(n-1)!}}.
\]
Because $a\leq Cn$, Stirling's formula gives
\[
 \E f_{n-1}(P_N)
 \geq n^{n/2}e^{-Cn}.
\]

Finally, from
\[
 n\leq t\leq\frac{3n}{2},
 \qquad
 c\sqrt n\leq a\leq Cn,
\]
and the choice of the absolute constant $A$, we may write
\[
 N=\left\lceil e^Tn^{3/2}\right\rceil
\]
with $T\in[n,2n]$ for all sufficiently large $n$.

For a general full-dimensional log-concave measure, truncate to large balls,
regularize by a small Gaussian convolution, recenter, and apply an invertible
linear map.  The resulting measures converge in total variation to $\mu$, and
for fixed $N$ the facet-count functional is continuous outside a set of
$\mu^N$-measure zero.  Since for fixed $n$ only finitely many integer values of
$N$ occur in the range
$[e^nn^{3/2},e^{2n}n^{3/2}+1]$, a subsequence has constant $N$; passing to the
limit proves the theorem without the temporary regularity assumptions.
\end{proof}

\subsection{Upper bound for the expected facets}

\begin{theorem}\label{thm:upper}
There exists an absolute constant $C>0$ with the following property. For
$n\geq2$, let $\mu$ be a full-dimensional log-concave probability measure on
$\R^n$ with density $f$, and  $X_1,\ldots,X_N$ be independent random
vectors with law $\mu$, where $N\geq2n$.  If
\[
    P_N=\conv\{X_1,\ldots,X_N\},
\]
then
\begin{equation}\label{eq:main-upper}
    \E f_{n-1}(P_N)
    \leq
    C^n N n^{(n-3)/2}.
\end{equation}
In particular, for every fixed $b>0$, if
\[
    N\leq e^{bn}n^{3/2},
\]
then
\begin{equation}\label{eq:exp-window-upper}
    \E f_{n-1}(P_N)
    \leq C_b^n n^{n/2}.
\end{equation}
\end{theorem}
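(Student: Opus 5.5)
We plan to use the affine Blaschke--Petkantschin formula, as in the proof of Theorem~\ref{thm:general-logconcave}, but now as an identity. Almost surely $P_N$ is simplicial, so its facets are the $n$-subsets of sample points whose affine hull leaves all other points on one side. Hence
\[
 \E f_{n-1}(P_N)=\binom Nn n!\omega_n\int_{S^{n-1}}\int_{\R}\Bigl[(1-\mu(H^+(u,r)))^{N-n}+\mu(H^+(u,r))^{N-n}\Bigr]J(u,r)\,dr\,d\sigma(u).
\]
By the symmetry $u\mapsto-u$ it suffices to bound the first term. By affine invariance we assume $\mu$ is isotropic, with the same smoothness assumptions and the same approximation at the end as before.

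Lemma~\ref{lem:logconcave-simplex} in dimension $n-1$, applied to the normalized section density, gives
\[
 J(u,r)\le \frac{C^n p(u,r)^n}{\sqrt{(n-1)!}\,\norm{f|_{H(u,r)}}_\infty/p(u,r)}.
\]
So we need a lower bound on the sup of the section density in terms of its mass. A cleaner route is to bound $\norm{f|_H}_\infty\ge c\,p^{\,n/(n-1)}$-type isotropic-constant estimates on the section. The crude bound via the slicing theorem gives $J(u,r)\le C^n p(u,r)^{n-1}\,p(u,r)\,/\sqrt{(n-1)!}$ after normalization; I will aim for $J\le C^n p(u,r)^{n}\,\mathrm{width}$-free, i.e.\ $J(u,r)\le C^n p(u,r)\, F(u,r)^{n-1}/\sqrt{(n-1)!}$ with $F(u,r)=\mu(H^+(u,r))$.

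The key one-dimensional input is this. For fixed $u$, the marginal $\phi(r)=p(u,r)$ is a log-concave density on $\R$ with tail $F(r)=\int_r^\infty\phi$. For log-concave $\phi$ one has the hazard bound $\phi(r)\le C\,F(r)\cdot\psi(r)$, and I need the section term to produce $\phi(r)F(r)^{n-1}$ up to $C^n$. I expect to get this by combining the section simplex bound with Grünbaum/Fradelizi-type estimates. Granting it, the $r$-integral becomes
\[
 \int\phi(r)(1-F(r))^{N-n}F(r)^{n-1}\,dr=\int_0^1(1-v)^{N-n}v^{n-1}dv=B(n,N-n+1).
\]
Then
\[
 \binom Nn n!\,B(n,N-n+1)\le C^n N,
\]
and multiplying by $\omega_n C^n/\sqrt{(n-1)!}$ together with the missing normalization gives $C^nN\,n^{(n-3)/2}$ via Stirling. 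The extra powers of $n$ must come from $\omega_n n!/\sqrt{(n-1)!}\approx C^n n^{(n-1)/2}$ and the Beta integral's $1/n$ factor.

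The main obstacle is the pointwise comparison $J(u,r)\lesssim C^n\phi(r)F(r)^{n-1}/\sqrt{(n-1)!}$, i.e.\ showing that the section's $(n-1)$-dimensional "volume scale" $p/\norm{f|_H}_\infty$ is at most $C\,F(r)^{(n-1)/(n-1)}$ in the appropriate normalized sense. I would first try to prove it through exponential tilts, as in Lemma~\ref{lem:lambda-upper} and \eqref{eq:section-mass-logconcave}. Tilting in direction $u$ so that the barycenter lies on $H(u,r)$ makes the tilted section density comparable to the tilted full density via Fradelizi's inequality. The slicing bound is then applied to the tilted measure, and the tilt factor $e^{-sr+\Lambda}$ converts back to $F(r)$ through the Chernoff-type relation $F(r)\ge c\,e^{-\Lambda^*}/\lambda$ from the one-dimensional variance bounds. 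Once this holds uniformly in $(u,r)$, the Beta-integral computation finishes \eqref{eq:main-upper}. Then \eqref{eq:exp-window-upper} follows by inserting $N\le e^{bn}n^{3/2}$.
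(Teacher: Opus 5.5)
\paragraph{There is a genuine gap.} It is exactly the step you flag as the main obstacle. The comparison $J(u,r)\le C^n\phi(r)F(r)^{n-1}/\sqrt{(n-1)!}$ is not merely unproved; it is false.

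Take $\mu$ the standard Gaussian. Then $p(u,r)=\phi_1(r)=(2\pi)^{-1/2}e^{-r^2/2}$, and the normalized section is a standard Gaussian on $H(u,r)$. So Lemma~\ref{lem:logconcave-simplex} gives $J(u,r)\ge c^n\phi_1(r)^n/\sqrt{(n-1)!}$. The Mills ratio gives $\phi_1(r)\ge rF(r)$. Hence $J(u,r)\ge (cr)^{n-1}\phi_1(r)F(r)^{n-1}/\sqrt{(n-1)!}$, which exceeds your target by $(cr)^{n-1}$. At the relevant levels $r\asymp\sqrt n$ this factor is of order $n^{(n-1)/2}$.

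Your own Beta computation shows the comparison proves too much. In fact $\binom Nn n!\,B(n,N-n+1)=(n-1)!$ exactly; it is neither $\le C^nN$ nor dependent on $N$. So the comparison would give $\E f_{n-1}(P_N)\le C^n\omega_n\sqrt{(n-1)!}=e^{O(n)}$ for every $N$. That contradicts Theorem~\ref{thm:general-logconcave}, and also the fact that Gaussian polytopes have $\E f_{n-1}\to\infty$ as $N\to\infty$ for fixed $n$. Your closing claim that the bookkeeping yields $C^nNn^{(n-3)/2}$ is therefore not an actual computation.

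\paragraph{Why the tilt route cannot give your bound.} Work at the Cram\'er point $x=\nabla\Lambda_\mu(su)\in H(u,r)$. There one has $p(u,r)\le Cse^{-t}/\lambda$ and $F(r)\ge ce^{-t}/(1+\lambda)$. Fradelizi plus the isotropic-constant bound give $\norm{f|_{H}}_\infty\ge f(x)\ge c^ne^{-t}\sqrt{\det H_x}$. Chaining these (for $\lambda\gtrsim1$) gives only
$p^n/\norm{f|_{H}}_\infty\le C^nF^{n-1}\,s^n/(\lambda\sqrt{\det H_x})$.
The extra factor equals $r^{n-1}$ for the Gaussian, is unbounded, and is not a function of the one-dimensional marginal in direction $u$.

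\paragraph{How the paper handles it.} The paper never integrates direction by direction. It neutralizes this factor only through an $n$-dimensional change of variables.
\begin{itemize}
\item It reparametrizes $(u,r)$ by the Cram\'er point $x$, with $dr\,d\sigma(u)=\det H_x\,\lambda^2\,dx/(n\omega_ns^{n+1})$ (Lemma~\ref{lem:jacobian-local}). This cancels the $s^{n+1}$ coming from $p^{n+1}$.
\item It then uses $\det H_x/f(x)\le C^ne^{2t}f(x)$, which follows from Fradelizi and the isotropic-constant lower bound applied to the tilt.
\item The integrand becomes $f(x)(e^{-t}/\lambda)^{n-1}\exp(-cNe^{-t}/(1+\lambda))$. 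Using $\sup_{y\geq0}y^{n-1}e^{-cy}$, this is bounded pointwise by $C^nf(x)(n/N)^{n-1}$.
\item Integrating $f$ and multiplying by $\binom Nn$ gives $C^nNn^{(n-3)/2}$. The factor $N$ is the price of this pointwise supremum.
\item Beforehand, the region $t(u,r)<t_0$ is discarded because there $\pi(u,r)\ge p_0$. On the complement, Lemma~\ref{lem:lambda-t-local} gives $\lambda\ge c_0$, which is needed to replace $(1+\lambda)/\lambda$ by a constant.
\end{itemize}
This change of variables with the determinant cancellation is the missing idea. A per-direction Beta integral in $r$ cannot substitute for it.
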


\begin{proof}
By affine invariance we may assume that $\mu$ is isotropic. As in the lower-bound argument, we first work under the same
regularity assumptions on $\Lambda_\mu$; the general case is recovered by the
same approximation at the end.

For $u\in S^{n-1}$ we write \[\pi(u,r):=\mu(H^+(u,r)).\]
Blaschke--Petkantschin yields \begin{equation}\label{eq:blaschke-formula}\E f_{n-1}(P_N)=\binom{N}{n}I,\end{equation} where \[I:=n!\omega_n \int_{S^{n-1}}\int_0^{\infty} \bigl((1-\pi(u,r))^{N-n}+\pi(u,r)^{N-n}\bigr) J(u,r) \,dr\, d\sigma(u). \]  
Since $\mu$ is centered, Gr\"unbaum's lemma gives \[\pi(u,r) \leq \mu(\{z: \langle z,u\rangle \geq 0\}) \leq 1-e^{-1}\] for every $u \in S^{n-1}$ and every $r>0$. Hence, \begin{equation}\label{eq:first-integral-bound}
\begin{aligned}
I &\leq e^{-c_1(N-n)}+n!\omega_n \int_{S^{n-1}}\int_0^{\infty} (1-\pi(u,r))^{N-n} J(u,r) \,dr\, d\sigma(u)\\
&:=e^{-c_1(N-n)}+I_1.
\end{aligned}
\end{equation}

For every $r>0$ and every $u \in S^{n-1}$, there is a unique $s=s(u,r)>0$ such that
\begin{equation}\label{eq:r-s-relation}
    r=\ip{\nabla\Lambda_\mu(su)}{u}.
\end{equation}
For this particular pair $(u,r)$ set
\begin{equation}\label{eq:local-cramer-point}
    x=x(u,r):=\nabla\Lambda_\mu(su),
    \qquad
    t=t(u,r):=\Lambda_\mu^*(x)=sr-\Lambda_\mu(su),
\end{equation}
and write
\begin{equation}\label{eq:local-lambda}
    \theta_x:=\nabla\Lambda_\mu^*(x)=su,
    \qquad
    H_x:=\nabla^2\Lambda_\mu^*(x),
    \qquad
    \lambda=\lambda(u,r):=
    \sqrt{\ip{\theta_x}{H_x^{-1}\theta_x}}.
\end{equation}
Observe that
\[
    \ip{x}{u}=r, \quad t>0.
\]
Therefore, the point $x=x(u,r)$ belongs to the specific hyperplane $H(u,r)$.
Moreover,
\begin{equation}\label{eq:lambda-ray}
    \lambda^2
    =
    s^2\ip{\nabla^2\Lambda_\mu(su)u}{u}.
\end{equation}
Let $\mu_{\theta_x}$ be the exponential tilt corresponding to $\theta_x$, and put
\[
    Z_x=\ip{\theta_x}{Y-x},
    \qquad Y\sim\mu_{\theta_x}.
\]
Then $Z_x$ is centered, log-concave, and has variance $\lambda^2$. If $g_x$
denotes its density, \eqref{eq:section-mass-logconcave} implies
\begin{equation}\label{eq:rho-at-cramer}
    g_x(0)=\frac{e^t}{s}p(u,r).
\end{equation}
Hence, \eqref{eq:g-section-bounds} yields
\begin{equation}\label{eq:rho-upper-local}
    p(u,r)
    \leq
    C\,\frac{s e^{-t}}{\lambda}.
\end{equation}
Moreover, a standard argument for centered log-concave random variables gives
\begin{equation}\label{eq:cap-lower-local}
    \pi(u,r)=
    e^{-t}\,\E_{\mu_{\theta_x}}
    \left[e^{-Z_x}\1_{\{Z_x\geq0\}}\right]
    \geq
    c\,\frac{e^{-t}}{1+\lambda},
\end{equation}
where $c>0$ is an absolute constant.
We shall also use the following two elementary consequences of the same
one-dimensional normalization:

\begin{lemma}\label{lem:lambda-t-local}
We have that 
    \[\lambda\leq C(1+t).\]
 In addition, there exist absolute constants $t_0,c_0>0$ such that \[t\geq t_0\ \Longrightarrow\ \lambda\geq c_0.\]
\end{lemma}

\begin{proof}
The first part is proven exactly as Lemma~\ref{lem:lambda-upper}.
For the second part, the standard one-dimensional
$\psi_1$ estimate gives \[\E_{\mu_{\theta_x}} e^{a|Z_x|/\lambda}\leq C,\] for some absolute constants $a,C>0$. Thus,
if $\lambda\leq a$ \[e^t=\E_{\theta_x}e^{-Z_x}\leq\E_{\theta_x}e^{a|Z_x|/\lambda} \leq C.\] The proof follows.
\end{proof}
If this Cram\'er level $t=t(u,r)$ satisfies
$t<t_0$, then Lemma~\ref{lem:lambda-t-local} and
\eqref{eq:cap-lower-local} give
\begin{equation}\label{eq:central-cap-positive}
    \pi(u,r)\geq p_0
\end{equation}
for an absolute $p_0>0$. Hence, \begin{equation}\label{eq:second-integral-bound}
\begin{aligned}I_1 &\leq e^{-c_2(N-n)}+n!\omega_n\iint_{\{(u,r):t(u,r)\geq t_0\}} (1-\pi(u,r))^{N-n} J(u,r)\, dr\,d\sigma(u)\\
&:=e^{-c_2(N-n)}+I_2. 
\end{aligned}
\end{equation}

If $u \in S^{n-1}$ and $r>0$, observe  that \[J(u,r)=p(u,r)^n\,\E\,\cH^{n-1}\bigl(\conv\{W_1,\ldots,W_n\}\bigr),\] where $W_1,\ldots,W_n$ are i.i.d $(n-1)$-dimensional log-concave random vectors, with density \[\frac{f(z)}{p(u,r)} \mathbf{1}_{H(u,r)}\]
Therefore Lemma~\ref{lem:logconcave-simplex} 
 yields,
\[
    J(u,r)
    \leq
    \frac{C^n}{\sqrt{(n-1)!}}
    \frac{p(u,r)^{n+1}}{\|f \,\mathbf{1}_{H(u,r)}\|_\infty} \leq \frac{C^n}{\sqrt{(n-1)!}}
    \frac{p(u,r)^{n+1}}{f(x)},
\]
since by \eqref{eq:local-cramer-point}, the point $x=x(u,r)$ lies on
$H(u,r)$. Using \eqref{eq:rho-upper-local}, we obtain
\begin{equation}\label{eq:J-cramer-upper}
    J(u,r)
    \leq
    \frac{C^n}{\sqrt{(n-1)!}}
    \frac{s^{n+1}e^{-(n+1)t}}
         {f(x)\lambda^{n+1}}.
\end{equation}

\begin{lemma}\label{lem:jacobian-local}
The parametrization $(u,r)\mapsto x(u,r)$ has Jacobian
\[
    dr\,d\sigma(u)
    =
    \frac{1}{n\omega_n}
    \frac{\det H_x\,\lambda^2}{s^{n+1}}\,dx\]
\end{lemma}

\begin{proof}
The change of variables in the lemma is the composition \[ (u,r)\longleftrightarrow (u,s) \longleftrightarrow \theta=su \longleftrightarrow x=\nabla\Lambda(\theta). \] 
We justify each step separately. As for fixed $u\in S^{n-1}$, we have defined
\[
    r=\ip{\nabla\Lambda_\mu(su)}{u},
\]
 and $(u,s)\mapsto (u,r_u(s))$ leaves the first coordinate unchanged
\[
    dr\,d\sigma(u)  =
    \ip{\nabla^2\Lambda_\mu(su)u}{u}\,ds \,d\sigma(u) =
    \frac{\lambda^2}{s^2}\,ds\,d\sigma(u),
\]
Next, writing $\theta=su$ and recalling that $\sigma$ is the probability
measure on $S^{n-1}$, the polar-coordinate formula gives
\[
    d\theta
    =
    n\omega_n s^{n-1}\,ds\,d\sigma(u).
\]
Finally, the map
\[
    x\longmapsto \theta_x=\nabla\Lambda^\ast_\mu(x)
\]
is a diffeomorphism. Hence,
\[
    d\theta=\det H_x\,dx.
\]
Combining the preceding identities, we obtain
\[
\begin{aligned}
    dr\,d\sigma(u)
    &=
    \frac{\lambda^2}{s^2}\,ds\,d\sigma(u)\\
    &=
    \frac{\lambda^2}{s^2}
    \frac{1}{n\omega_n s^{n-1}}\,d\theta\\
    &=
    \frac{1}{n\omega_n}
    \frac{\det H_x\,\lambda^2}{s^{n+1}}\,dx,
\end{aligned}
\]
which proves the lemma.
\end{proof}
We now need one pointwise estimate which is specific to the upper bound. Let
$g_x$ denote the density of the tilt $\mu_{\theta_x}$. Since $x$ is the barycenter of
$\mu_{\theta_x}$,
\[
    g_x(x)=e^t f(x),
    \qquad
    \Cov(\mu_{\theta_x})=H_x^{-1}.
\]
Fradelizi's inequality gives
\[
    \norm{g_x}_\infty
    \leq e^n g_x(x)
    =
    e^{n+t}f(x).
\]
On the other hand, the universal lower bound for the isotropic constant of a
log-concave density gives
\[
    c^n
    \leq
    \norm{g_x}_\infty\sqrt{\det\Cov(\mu_{\theta_x})}
    =
    \frac{\norm{g_x}_\infty}{\sqrt{\det H_x}}.
\]
Consequently,
\begin{equation}\label{eq:determinant-cancellation-local}
    \frac{\det H_x}{f(x)}
    \leq
    C^n e^{2t}f(x).
\end{equation}

From \eqref{eq:cap-lower-local} and $N\geq2n$,
\begin{equation}\label{eq:empty-cap-upper}
\begin{split}
    (1-\pi(u,r))^{N-n}
    &\leq \exp\bigl(-(N-n)\pi(u,r)\bigr)\\
    &\leq
    \exp\left(-c\frac{Ne^{-t}}{1+\lambda}\right).
\end{split}
\end{equation}
Inserting \eqref{eq:J-cramer-upper}, the Jacobian
Lemma~\ref{lem:jacobian-local}, and finally
\eqref{eq:determinant-cancellation-local} into \eqref{eq:second-integral-bound}, we obtain
\begin{align}
    I_2
    &\leq
    \frac{C^n n!}{n\sqrt{(n-1)!}}
    \int_{\{x:t(x)\geq t_0\}}
    e^{-(n+1)t}
    \frac{\det H_x}{f(x)\lambda^{n-1}}
    \exp\left(-c\frac{Ne^{-t}}{1+\lambda}\right)
    dx
    \notag\\
    &\leq
    C^n\sqrt{(n-1)!}
    \int_{\{x:t(x)\geq t_0\}}
    f(x)
    \frac{e^{-(n-1)t}}{\lambda^{n-1}}
    \exp\left(-c\frac{Ne^{-t}}{1+\lambda}\right)
    dx.
    \label{eq:I-after-cancellation}
\end{align}
  On $\{x:t(x)\geq t_0\}$,
Lemma~\ref{lem:lambda-t-local} gives $\lambda\geq c_0$.  Introduce
\begin{equation}\label{eq:y-substitution-pointwise}
    y=\frac{Ne^{-t}}{1+\lambda}.
\end{equation}
Then, pointwise, for $d=n-1$
\begin{align}
    \frac{e^{-dt}}{\lambda^d}e^{-cy}
    &=
    N^{-d}y^d
    \left(\frac{1+\lambda}{\lambda}\right)^d
    e^{-cy}
    \notag\\
    &\leq
    C^dN^{-d}y^de^{-cy}
    \notag\\
    &\leq
    C^dN^{-d}d^d,
    \label{eq:pointwise-saddle}
\end{align}
where in the last step we used
\[
    \sup_{y\geq0}y^de^{-cy}
    =\left(\frac{d}{ce}\right)^d
    \leq C^dd^d.
\]
Substituting \eqref{eq:pointwise-saddle} into
\eqref{eq:I-after-cancellation} and using $\int f=1$, we get
\begin{equation}\label{eq:I-final}
    I_2
    \leq
    C^n\sqrt{(n-1)!}
    \frac{(n-1)^{n-1}}{N^{n-1}}.
\end{equation}
Therefore, combining \eqref{eq:first-integral-bound}, \eqref{eq:second-integral-bound} and \eqref{eq:I-final}
\[
    \E f_{n-1}(P_N)
    \leq 2\binom{N}{n}e^{-c(N-n)}+
    \binom Nn C^n \sqrt{(n-1)!}\, \frac{(n-1)^{n-1}}{N^{n-1}}.
    \]
For $N\geq2n$, 
\[
   2 \binom Nn e^{-c(N-n)}
    \leq
    \left(\frac{2eN}{n}\right)^n e^{-cN/2},
\]
and the right-hand side is at most $e^{C_1n}$ uniformly for $N/n\geq2$.
This error will be absorbed into the final estimate. Consequently,
\begin{align}
    \E f_{n-1}(P_N) &\leq e^{C_1n}+ C^n
    \frac{N^n}{n!}
    \sqrt{(n-1)!}
    \frac{(n-1)^{n-1}}{N^{n-1}}
    \notag\\
    &=
    e^{C_1n}+C^nN
    \frac{(n-1)^{n-1}}{n\sqrt{(n-1)!}}
    \label{eq:before-stirling}
\end{align}
By Stirling's formula,
\begin{equation}\label{eq:stirling-last}
    \frac{(n-1)^{n-1}}{n\sqrt{(n-1)!}}
    \leq C^n n^{(n-3)/2}.
\end{equation}
After increasing $C$, the error $e^{C_1n}$ is also bounded by the right-hand
side of \eqref{eq:stirling-last}, since $N\geq2n$.  This proves
\eqref{eq:main-upper}.

Finally, if $N\leq e^{bn}n^{3/2}$, then
\[
    C^nNn^{(n-3)/2}
    \leq
    (Ce^b)^n n^{3/2}n^{(n-3)/2}
    =(Ce^b)^n n^{n/2},
\]
which proves \eqref{eq:exp-window-upper} under the regularity assumptions.

The regularity assumptions are removed exactly as in the lower-bound
argument. For fixed $N$, the approximating product measures converge
in total variation, while
\[
    f_{n-1}(P_N)\leq\binom Nn.
\]
Hence the expectations converge as well. This completes the proof of
Theorem~\ref{thm:upper}.
\end{proof}
\begin{remark}\rm 
The preceding estimate is genuinely an average-case statement and has no
pointwise analogue, even for the standard Gaussian measure.  Indeed, McMullen's Upper Bound Theorem states that among all
$n$-dimensional polytopes with $N$ vertices the cyclic polytope
$C(N,n)$ has the largest possible number of facets. Since the Gaussian
density is strictly positive everywhere, and the combinatorial type of a
fixed realization of $C(N,n)$ is preserved under sufficiently small
perturbations, a Gaussian random polytope has, with positive probability,
exactly this maximal number of facets.
\[
 f_{n-1}(C(N,n))
 =
 \begin{cases}
 \displaystyle
 \binom{N-m}{m}+\binom{N-m-1}{m-1},
 & n=2m,\\[2ex]
 \displaystyle
 2\binom{N-m-1}{m},
 & n=2m+1.
 \end{cases}
\]
Thus, for fixed $n$ and large $N$, the maximal number of facets is of order
$N^{\lfloor n/2\rfloor}$, whereas the expectation in the exponential regime
considered above is only $n^{n/2}e^{O(n)}$.  This is in sharp contrast with
discrete models such as $0/1$ polytopes, where one may hope for a
corresponding bound on the maximal number of facets.
\end{remark}

\bibliographystyle{plain}
\bibliography{many_facets_product_logconcave}

\end{document}